\newenvironment{proofof}[1]{\par
\pushQED{\qed}%
\normalfont \topsep6\p@\@plus6\p@\relax
\trivlist
\item[\hskip\labelsep
\em Proof of #1\@addpunct{.}]\ignorespaces
}{%
\popQED\endtrivlist\@endpefalse
}
\newtheorem{Th}{Theorem}[section] 
\newtheorem{Lem}{Lemma}[section]   
\newtheorem{Def}{Definition}  
\newtheorem{Rem}{Remark}[section]
\newcommand{\R}{\mathbb{R}}
\newcommand{\Z}{\mathbb{Z}}
\newcommand{\C}{\mathbb{C}}
\newcommand{\T}{\mathbb{T}}
\newcommand{\h}{\mathfrak{h}}
\newcommand{\HH}{\mathcal{H}}
\newcommand{\LL}{\mathcal{L}}
\newcommand{\id}{\text{\rm id}}
\newcommand{\Span}{\mathop{\rm span}\nolimits}
\newcommand{\tr}{\text{\rm tr}\,}
\begin{document}

\title{On the non-existence of local Birkhoff coordinates for the focusing NLS equation}   
 
\author{T. Kappeler\thanks{T.K. is partially supported by the Swiss NSF.}, 
P. Topalov\thanks{P.T. is partially supported by the Simons Foundation, Award \#526907.}} 

\date{}
\maketitle

\begin{abstract}  
We prove that there exist potentials so that near them the focusing non-linear  Schr\"odinger equation does not admit 
local Birkhoff coordinates. The proof is based on the construction of a local normal form of the linearization of the equation 
at such potentials.
\end{abstract}


\section{Introduction}\label{sec:introduction}
It is well known that the non-linear Schr\"odinger (NLS) equation 
\begin{equation}\label{eq:NLS_introduction}
\left\{
\begin{array}{l}
\dot{\varphi}_1=i\varphi_{1xx}-2i\varphi_1^2\varphi_2,\\
\dot{\varphi}_2=-i\varphi_{2xx}+2i\varphi_1\varphi_2^2,
\end{array}
\right.
\end{equation}
on the torus $\T\equiv\R/\Z$ is a Hamiltonian PDE on the scale of Sobolev spaces
$H^s_c=H^s_\C\times H^s_C$, $s\ge 0$, with Poisson bracket 
\begin{equation}\label{eq:poisson_bracket_introduction}
\{F,G\}(\varphi):=
-i\int_0^1\big((\partial_{\varphi_1}F)(\partial_{\varphi_2}G)-(\partial_{\varphi_1}G)(\partial_{\varphi_2}F)\big)\,dx,
\end{equation}
and Hamiltonian $\HH : H^1_c\to\C$, given by
\begin{equation}\label{eq:NLS-Hamiltonian_introduction}
\HH(\varphi)=\int_0^1\big(\varphi_{1x}\varphi_{2x}+\varphi_1^2\varphi_2^2\big)\,dx,
\quad\varphi=(\varphi_1,\varphi_2)\in H^1_c.
\end{equation}
Here, for any $s\ge 0$, $H^s_\C\equiv H^s(\T,\C)$ denotes the Sobolev space of complex valued functions on $\T$
and the Poisson bracket \eqref{eq:poisson_bracket_introduction} is defined for functionals $F$ and $G$ on $H^s_c$,
provided that the pairing given by the integral in \eqref{eq:poisson_bracket_introduction} is well-defined 
(cf. Section \ref{sec:set-up} for more details on these matters).
The NLS phase space $H^s_c$ is a direct sum of two reals subspaces $H^s_\C=H^s_r\oplus_\R i H^s_r$ where
\begin{equation*}
H^s_r=\big\{\varphi\in H^s_c\,\big|\,\varphi_2=\overline{\varphi_1}\big\}\quad\text{and}\quad
i H^s_r=\big\{\varphi\in H^s_c\,\big|\,\varphi_2=-\overline{\varphi_1}\big\}.
\end{equation*}
The Hamiltonian vector field, corresponding to \eqref{eq:poisson_bracket_introduction} and \eqref{eq:NLS-Hamiltonian},
\[
X_\HH(\varphi)=i\big(-\partial_{\varphi_2}\HH,\partial_{\varphi_1}\HH\big)=
\big(i\varphi_{1xx}-2i\varphi_1^2\varphi_2, -i\varphi_{2xx}+2i\varphi_1\varphi_2^2\big)
\]
is tangent to the real subspaces $H^s_r$ and $i H^s_r$ (cf. Section \ref{sec:set-up}) and for any $s\ge 0$ the restrictions 
\[
X_\HH\big|_{H^2_r} : H^2_r\to L^2_r\quad\text{and}\quad X_\HH\big|_{i H^2_r} : i H^2_r\to i L^2_r
\]
are real analytic maps (cf. Section \ref{sec:set-up}). The vector field $X_\HH\big|_{H^2_r}$ corresponds to
the {\em defocusing} NLS (dNLS) equation
\[
i u_t=-u_{xx}+2|u|^2u
\]
whereas $X_\HH\big|_{i H^2_r}$ corresponds to the {\em focusing} NLS (fNLS) equation
\[
i u_t=-u_{xx}-2|u|^2u.
\]
Both equations are known to be well-posed on the Sobolev space $H^s_r$ and respectively $i H^s_r$ for any $s\ge 0$.
Moreover, they are integrable PDEs: the dNLS equation can be brought into Birkhoff normal form on the {\em entire}
phase space $L^2_r$ (cf. \cite{GK1}) whereas for the fNLS equation, an Arnold-Liouville type theorem 
has been established in \cite{KT}. The aim of this paper is to study the local properties of the vector field $X_\HH$ in
small neighborhoods of the {\em constant potentials} 
\[
\varphi_c(x)=(c,-\bar{c})\in i L^2_r\cap C^\infty_r,\quad c\in\C\setminus\{0\}.
\]
More specifically, for a given $c\in\C$, $c\ne 0$, consider the {\em re-normalized} NLS Hamiltonian
\[
\HH^c=\HH-2|c|^2\HH_1
\]
where 
\[
\HH_1(\varphi)=-\int_0^1\varphi_1(x)\varphi_2(x)\,dx.
\]
One of our main results is the following instance of an infinite dimensional version of Williamson classification theorem 
in finite dimensions (\cite{Wi}). 

\begin{Th}\label{th:hessian_normal_form_introduction}
Assume that $c\in\C$ and $|c|\notin \pi\Z$. Then there exists a Darboux basis $\big\{\alpha_k,\beta_k\big\}_{k\in\Z}$ in
$i L^2_r$ such that the Hessian $d^2_{\varphi_c}\HH^c$, when viewed as a quadratic form represented in this basis, 
takes the form
\begin{eqnarray}
d^2_{\varphi_c}\HH^c&=&4|c|^2dp_0^2-
\sum_{0<\pi k<|c|}4\pi k\sqrt[+]{|c|^2-\pi^2k^2}\big(dp_kdq_k+dp_{-k}dq_{-k}\big)\nonumber\\
&-&\sum_{\pi|k|>|c|}4\pi|k|\sqrt[+]{\pi^2k^2-|c|^2}\big(dp_k^2+dq_k^2\big)
\end{eqnarray}
where $\big\{(dp_k,dq_k)\big\}_{k\in\Z}$ are the dual coordinates.
\end{Th}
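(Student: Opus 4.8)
The plan is to read off the normal form from the spectrum of the linearized Hamiltonian vector field, which is the essence of Williamson's theorem: the symplectic invariants of a quadratic Hamiltonian are encoded in the eigenvalues of the associated infinitesimally symplectic operator $L=d_{\varphi_c}X_{\HH^c}$. First I would record that the re-normalization by $2|c|^2\HH_1$ is chosen precisely so that $\varphi_c$ is a critical point of $\HH^c$, whence the Taylor expansion at $\varphi_c$ starts at second order. Parametrizing $iL^2_r$ by $u$ via $(\varphi_1,\varphi_2)=(c+u,-\bar c-\bar u)$ and expanding $\HH^c(\varphi_c+\cdot)$ to second order, the quadratic part of the expansion is the real-valued form
\[
\int_0^1\big(-|u_x|^2+\bar c^2u^2+c^2\bar u^2+2|c|^2|u|^2\big)\,dx .
\]
In parallel I would compute the symplectic form induced by \eqref{eq:poisson_bracket_introduction} on $iL^2_r$, namely $\Omega(v,w)=2\,\Image\int_0^1 v\bar w\,dx$, and the linearization $Lu=i\big(u_{xx}+2|c|^2u+2c^2\bar u\big)$, obtained by linearizing $X_{\HH^c}$ at $\varphi_c$ and checking that it preserves $iL^2_r$. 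Since $\varphi_c$ is critical, $L=\Omega^{-1}\,d^2_{\varphi_c}\HH^c$, so the eigenvalues of $L$ are the symplectic invariants we must extract.

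Next I would pass to the Fourier coefficients $u=\sum_k u_ke^{2\pi ikx}$. Because $L$ is only real-linear — the term $2c^2\bar u$ conjugates and reflects the index — it couples $u_k$ with $\bar u_{-k}$, and therefore leaves invariant the two-dimensional space of the zero mode together with, for every $k\ge 1$, the four-real-dimensional space spanned by $u_k$ and $u_{-k}$. On the block indexed by $\{k,-k\}$ the operator is represented (on the complexification) by a $2\times2$ matrix whose characteristic polynomial I would compute to be $\lambda^2+\omega_k^2-4|c|^4$ with $\omega_k=2|c|^2-4\pi^2k^2$, which simplifies to
\[
\lambda^2=16\pi^2k^2\big(|c|^2-\pi^2k^2\big).
\]

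This identity produces exactly the expected trichotomy. For $0<\pi k<|c|$ the eigenvalues $\pm 4\pi k\sqrt{|c|^2-\pi^2k^2}$ are real, so each block is hyperbolic and the finite-dimensional Williamson theorem yields conjugate coordinates in which the quadratic form reads $\propto dp_kdq_k+dp_{-k}dq_{-k}$; for $\pi|k|>|c|$ the eigenvalues are purely imaginary and the block is elliptic, producing the $dp_k^2+dq_k^2$ terms; the zero mode has $\lambda=0$ together with a rank-one quadratic form, producing the single term $4|c|^2dp_0^2$. The hypothesis $|c|\notin\pi\Z$ is what guarantees that $\lambda\ne 0$ for all $k\ne 0$, so that no block degenerates and no nontrivial Jordan structure appears. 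Within each block I would normalize the eigenvectors symplectically with respect to $\Omega$; the factor $2$ in $\Omega$, together with the convention used for $d^2_{\varphi_c}\HH^c$, is what converts the bare eigenvalues $\pm 4\pi|k|\sqrt{|\,|c|^2-\pi^2k^2|}$ into the stated coefficients.

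The main obstacle is not the blockwise computation but the global assembly: one must verify that the symplectically normalized vectors $\{\alpha_k,\beta_k\}_{k\in\Z}$ produced block by block constitute a genuine Darboux basis of $iL^2_r$ — that they are complete, that the mixed symplectic products across distinct blocks vanish, and that the expansion of an arbitrary $u\in iL^2_r$ in this basis converges in $L^2$. I would handle this by checking orthogonality of distinct Fourier blocks under both $\Omega$ and the $L^2$ pairing, and by controlling the normalization constants $\sqrt{|c|^2-\pi^2k^2}$ and $\sqrt{\pi^2k^2-|c|^2}$ as $|k|\to\infty$, where they grow like $\pi|k|$ and hence keep the resulting basis uniformly comparable to the standard Fourier basis. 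Particular care is needed for the finitely many hyperbolic blocks with $\pi k<|c|$, where the stable and unstable eigenvectors must be paired and symplectically normalized, and for the degenerate zero mode, whose conjugate direction (the phase/mass symmetry) must be inserted consistently into the Darboux frame.
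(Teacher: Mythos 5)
Your proposal is correct and takes essentially the same route as the paper: the same pairing of Fourier modes $k$ and $-k$ into invariant symplectic blocks, the same characteristic polynomial $\lambda^2=16\pi^2k^2\big(|c|^2-\pi^2k^2\big)$ with its hyperbolic/elliptic/nilpotent trichotomy, and the same recognition that the real work is the global assembly, which the paper settles by constructing the block eigenvectors explicitly and proving the uniform asymptotics $\alpha_{\pm k}=\xi'_{\pm k}+O(1/k^2)$, $\beta_{\pm k}=\eta'_{\pm k}+O(1/k^2)$, so that the blockwise Darboux vectors form a genuine basis of $i H^s_r$ (by the criterion cited from \cite{Lax}). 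The only differences are minor: you keep $c$ complex where the paper first reduces to real $c$ via the gauge symmetry, and you invoke the finite-dimensional Williamson theorem blockwise where the paper writes the normalizing vectors explicitly --- a choice the authors themselves justify in a remark by exactly the uniformity concern your last paragraph raises, since an abstract per-block application of Williamson gives no control of the normalizing transformations as $|k|\to\infty$ (also, mind the sign convention: with tangent vectors $(v,-\bar v)$ to $iL^2_r$, the paper's $\omega$ induces $-2\,\Image\int_0^1 v\bar w\,dx$, not $+2\,\Image\int_0^1 v\bar w\,dx$).
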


We conjecture in Section \ref{sec:linearization} that Theorem \ref{th:hessian_normal_form_introduction} can be 
generalized to a small neighborhood of the constant potential $\varphi_c$. We refer to the end of Section \ref{sec:linearization} 
where the precise statement of such a generalization is given. An analog of Theorem \ref{th:hessian_normal_form_introduction}, 
formulated in terms of the linearization of the Hamiltonian vector field $X_{\HH^c}$ at the constant potential $\varphi_c$, 
is formulated in Section \ref{sec:linearization} (see Theorem \ref{th:normal_form}). As a consequence of these results, we obtain

\begin{Th}\label{th:main_introduction}
For any given $c\in\C$ with $|c|\notin\pi\Z$ and $|c|>\pi$ the focusing NLS equation does {\em not} allow gauge invariant 
local Birkhoff coordinates in a neighborhood of the constant potential $\varphi_c$.
\end{Th}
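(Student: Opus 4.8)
The plan is to argue by contradiction at the linearized level, exploiting that the existence of gauge invariant local Birkhoff coordinates forces the linearization of $X_{\HH^c}$ at $\varphi_c$ to have purely imaginary spectrum, which is incompatible with the normal form of Theorem \ref{th:hessian_normal_form_introduction} once $|c|>\pi$.

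First I would unwind the definition of gauge invariant local Birkhoff coordinates: a real analytic symplectomorphism $\Phi$ from a neighborhood of $\varphi_c$ in $i L^2_r$ onto a neighborhood in a model sequence space, intertwining the gauge $S^1$-action with the standard rotation, and in which the NLS Hamiltonian $\HH$ is a function of the actions $I_k=(x_k^2+y_k^2)/2$. The point of gauge invariance is that the mass $\HH_1$, being the generator of the gauge flow, is then also a function of the actions; hence so is the renormalized Hamiltonian $\HH^c=\HH-2|c|^2\HH_1$. I would then check that $\varphi_c$ corresponds to the vanishing of all oscillatory actions $I_k$, $k\ne0$, with only the gauge action $I_0=I_0^c\ne0$ excited, and that the very choice of renormalization is what forces the gauge frequency $\omega_0:=\partial_{I_0}\HH^c(\varphi_c)$ to vanish, making $\varphi_c$ a genuine critical point of $\HH^c$.

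Next I would compute the Hessian and the linearization in these coordinates. Differentiating $\HH^c$ twice at $\varphi_c$ gives
\[
d^2_{\varphi_c}\HH^c=\big(\partial_{I_0}^2\HH^c\big)\,(dI_0)^2+\sum_{k\ne0}\omega_k\,\big(dx_k^2+dy_k^2\big),
\]
with $dI_0$ a nonzero linear form on the two-dimensional gauge plane and $\omega_k=\partial_{I_k}\HH^c(\varphi_c)$; correspondingly, the linearization $A:=d_{\varphi_c}X_{\HH^c}$ splits as a direct sum, over $k\ne0$, of planar rotations with frequencies $\omega_k$, together with a nilpotent shear in the gauge plane reflecting $\omega_0=0$. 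In particular every nonzero eigenvalue of $A$ is purely imaginary, and the Hessian carries no hyperbolic block---only elliptic oscillator blocks $\omega_k(dx_k^2+dy_k^2)$ and the single rank-one parabolic gauge block $(dI_0)^2$ (matching the $4|c|^2dp_0^2$ term).

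The contradiction is then delivered by conjugation invariance of the spectrum: $d\Phi_{\varphi_c}$ is a bounded linear symplectic isomorphism conjugating $A$ to its Birkhoff model, hence preserving eigenvalues. But for $|c|\notin\pi\Z$ the normal form of Theorem \ref{th:hessian_normal_form_introduction} (equivalently Theorem \ref{th:normal_form}) is non-degenerate, and for $|c|>\pi$ the index set $\{k>0:\pi k<|c|\}$ is non-empty, since it contains $k=1$; the associated hyperbolic blocks $-4\pi k\sqrt[+]{|c|^2-\pi^2k^2}\,(dp_kdq_k+dp_{-k}dq_{-k})$ endow $A$ with a pair of real, nonzero eigenvalues. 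A real eigenvalue cannot lie in the purely imaginary spectrum produced by Birkhoff coordinates, so no gauge invariant local Birkhoff coordinates exist near $\varphi_c$. The main obstacle I anticipate is the infinite dimensional bookkeeping: one must verify that $d\Phi_{\varphi_c}$ is a bounded symplectic isomorphism that genuinely conjugates the (unbounded) linearizations on the relevant Sobolev scale, so that the real eigenvalues of the finitely many hyperbolic modes survive; and one must justify carefully that gauge invariance forces $\HH_1$---not merely $\HH$---to be a function of the actions, and that $\varphi_c$ indeed sits at the origin of all oscillatory modes. Once these structural points are secured, the exclusion of the hyperbolic block is the elementary remark that $|c|>\pi$ renders $k=1$ admissible, which is exactly where the hypotheses $|c|\notin\pi\Z$ and $|c|>\pi$ enter.
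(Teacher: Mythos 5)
Your overall skeleton is the paper's: linearize $X_{\HH^c}$ at $\varphi_c$, show that gauge invariant local Birkhoff coordinates would force the conjugated linearization to have purely imaginary spectrum, and contradict this with the real eigenvalues $\pm 4\pi k\sqrt[+]{|c|^2-\pi^2k^2}$ of the hyperbolic blocks, which exist precisely because $|c|>\pi$ makes $k=1$ admissible. The final contradiction and the conjugation of the linearizations via $d_{\varphi_c}\Phi$ (the paper's diagram \eqref{eq:diagram_operators}) are sound. The gap lies in your proof of the key intermediate statement (the paper's Lemma \ref{lem:the_spectrum_of_X_H_tilde}), at exactly the two points you flag as ``structural points to be secured'' --- and these cannot be secured as stated; they must be replaced by different arguments. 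First, you assume that $\Phi(\varphi_c)$ is a point where all oscillatory actions vanish and only a single ``gauge'' action is excited. Nothing in Definitions \ref{def:local_birkhoff_coordinates} and \ref{def:gauge_invariant_coordinates} yields this: $\Phi$ is an arbitrary hypothetical canonical $C^2$-diffeomorphism, and $z^\bullet=\Phi(\varphi_c)$ may have any number of nonzero components. The paper is forced to split $\Z$ into $A=\{l:(p_l^\bullet,q_l^\bullet)\ne(0,0)\}$ and $B=\{l:(p_l^\bullet,q_l^\bullet)=(0,0)\}$ and treat the two sets by separate arguments; that $A$ is a singleton is obtained only at the very end, from the fact that zero is an eigenvalue of $\LL_c$ of geometric multiplicity one (Theorem \ref{th:the_spectrum_of_X_H}(i)) transported through the conjugation --- it is a conclusion of the spectral analysis, not an admissible input.

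Second, you pass from gauge invariance to ``$\HH_1$, hence $\HH^c$, is a function of the actions'' and then differentiate that function twice at the critical point. The definitions give only Poisson commutation: $\{I_k,H\}=0$ and invariance of the actions $\mathcal{I}_k$ under the flow $S^t$. Upgrading torus-invariance of a $C^2$ function to twice-differentiable functional dependence on the actions fails in general at points where actions vanish: for example $F=(x^2+y^2)^{5/4}$ on $\R^2$ is $C^2$ and rotation invariant, yet it is not a $C^2$ function of $I=(x^2+y^2)/2$ at $I=0$. Such points are unavoidable here because of the first gap, and the infinite-dimensional, merely $C^2$ setting of Definition \ref{def:local_birkhoff_coordinates} makes the issue worse. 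The paper's proof is built to avoid this step entirely: it never writes $H^c$ as a function of the actions, but instead differentiates the identities $\{H^c,I_l\}=0$ once to get \eqref{eq:relation1} and, via Lemma \ref{lem:center}, \eqref{eq:relation2} for $l\in B$, and then uses that the rotation flows $S_k^t$, $k\in B$, fix $z^\bullet$ and commute with $X_{H^c}$ to obtain the commutator relations \eqref{eq:commutator_relation}; together these pin down the explicit form \eqref{eq:L-tilde_final} of $\widetilde{\LL}_c$, from which the purely imaginary spectrum is read off. To complete your argument you would need to replace your two structural claims by this kind of differentiated-commutation analysis (or prove a smooth-invariants theorem adequate to the $C^2$, infinite-dimensional setting); as written, the proposal has a genuine gap.
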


We refer to Section \ref{sec:birkhoff_coordinates} for the precise definition of gauge invariant local Birkhoff coordinates.
In more general terms, Theorem \ref{th:main_introduction} means that there is no neighborhood of the constant potential 
$\varphi_c$ with $|c|\notin\pi\Z$ and $|c|>\pi$ where one can introduce action-angle coordinates for the fNLS equation
so that the action variables commute with the Hamiltonian $\HH_1$. Note that a similar result could be obtained using
the B\"acklund transform and the existence of a homoclinic orbit in a neighborhood of the constant potential
$\varphi_c$ (cf. \cite{LM}). Note however, that such a neighborhood of $\varphi_c$ is {\em not} arbitrarily small since 
it contains the homoclinic solution of the fNLS equation.

\medskip

Finally, note that the same results hold for the potentials
\[
\varphi_{c,k}:=\big(c e^{2\pi i k x},-\bar{c} e^{-2\pi i k x}\big)
\]
where $k\in\Z$ and $c\in\C$ with $|c|\notin\pi\Z$ and $|c|>\pi$. The only difference is that the re-normalized Hamiltonian
for these potentials is of the form $\HH_{c,k}=\HH+\alpha\HH_1+\beta\HH_2$ where $\alpha,\beta\in\C$ are specifically chosen
constants depending on the choice of $c$ and $k$ and $\HH_2(\varphi)=i\int_0^1\varphi_1(x)\varphi_{2x}(x)\,dx$.
This easily follows from the fact that for any given $k\in\Z$ and for any $s\ge 0$ the transformation
\[
\tau_k : i H^s_r\to i H^s_r,\quad\big(\varphi_1,\varphi_2\big)\mapsto\big(\varphi_1e^{2\pi i k x},\varphi_2e^{-2\pi i k x}\big),
\]
preserves the symplectic structure induced by \eqref{eq:poisson_bracket_introduction} and transforms the Hamiltonian $\HH$ into
a linear sum of the Hamiltonians $\HH$, $\HH_1$, and $\HH_2$.

\medskip\medskip

\noindent{\em Organization of the paper:} The paper is organized as follows: In Section \ref{sec:set-up} we introduce the
basic notions related to the symplectic phase geometry of the NLS equation that are needed in this paper. 
Theorem \ref{th:hessian_normal_form_introduction} and related results are proven in Section \ref{sec:linearization}.
In Section \ref{sec:birkhoff_coordinates} we prove Theorem \ref{th:main_introduction}.

\medskip\medskip

\noindent{\em Acknowledgment:} 
Both authors would like to thank the Fields Institute and its staff for the excellent working conditions, and the organizers of
the workshop ``Inverse Scattering and Dispersive PDEs in One Space Dimension" for their efforts to make it such a successful 
and pleasant event. The second author acknowledges the partial support of the Institute of Mathematics and Informatics of the 
Bulgarian Academy of Sciences.

\section{Set-up}\label{sec:set-up}
\noindent {\em 1) The NLS phase space.}
It is well-known that the non-linear Schr\"odinger equation is a Hamiltonian system on the phase space
$L^2_c:=L^2_{\C}\times L^2_{\C}$ where $L^2_{\C}\equiv L^2(\T,\C)$ is the space of square summable
complex-valued functions on the torus $\T$. For any two elements $f,g\in L^2_c$,
the Hilbert scalar product on $L^2_c$ is defined as $(f,g)_{L^2}:=\int_0^1(f_1\overline{g_1}+f_2\overline{g_2})\,dx$
where $f=(f_1,f_2)$, $g=(g_1,g_2)$, and $\overline{g_1}$ and $\overline{g_2}$ denote the complex conjugates of 
$g_1$ and $g_2$ respectively. In addition to the scalar product we will also need the non-degenerate pairing 
\begin{equation}\label{eq:pairing}
\langle f,g\rangle_{L^2}:=\int_0^1(f_1g_1+f_2g_2)\,dx\,.
\end{equation}
The symplectic structure on $L^2_c$ is
\begin{equation}\label{eq:symplectic_structure}
\omega(f,g):=-i \int_0^1
\det
\begin{pmatrix}
f_1&g_1\\
f_2&g_2
\end{pmatrix}
\,dx
\end{equation}
(Note that $\omega(f,g)$ is {\em not} the K\"ahler form of the Hermitian scalar product $(\cdot,\cdot)_{L^2}$ in $L^2_c$.)
Consider also the scale of Sobolev spaces $H^s_c:=H^s_{\C}\times H^s_{\C}$ where $H^s_{\C}\equiv H^s(\T,\C)$ is 
the Sobolev space of complex-valued distributions on $\T$ and $s\in\R$.
For any given $s\in\R$ the pairing \eqref{eq:pairing} induces an isomorphism $\imath_s : \big(H^s_c\big)'\to H^{-s}_c$ where
$(H^s_c)'$ denotes the space of continuous linear functionals on $H^s_c$. In this way, for any given $s\in\R$ the symplectic structure 
extends to a bounded bilinear map $\omega : H^s\times H^{-s}\to \C$.
The $L^2$-gradient $\partial_\varphi F=\big(\partial_{\varphi_1}F,\partial_{\varphi_2}F\big)$ of a $C^1$-function 
$F : H^s\to\C$ at $\varphi\in L^2_c$ is defined by $\partial_\varphi F:=\imath_s(d_\varphi F)\in H^{-s}_c$ where 
$d_\varphi F\in (H^s_c)'$ is the differential of $F$ at $\varphi\in L^2_c$. 
In particular, the Hamiltonian vector field $X_F$ corresponding to a $C^1$-smooth function 
$F : H^s_c\to\C$ at $\varphi\in H^s_c$ defined by the relation $\omega\big(\cdot,X_F(\varphi)\big)=d_\varphi F(\cdot)$
is then given by
\begin{equation}\label{eq:X_F}
X_F(\varphi)=i\big(-\partial_{\varphi_2}F,\partial_{\varphi_1}F\big).
\end{equation}
The vector field $X_F$ is a continuous map $X_F : H^s_c\to H^{-s}_c$.

\begin{Rem}
Since $X_F : H^s_c\to H^{-s}_c$ we see that strictly speaking $X_F$ is a weak vector field on $H^{-s}_c$.
However, for the sake of convenience in this paper we  will call such maps {\em vector fields} on $H^s_c$.
\end{Rem}

\begin{Rem}
The Poisson bracket of two $C^1$-smooth functions 
$F,G : H^s_c\to\C$ is then given by
\begin{equation}\label{eq:poisson_bracket}
\{F,G\}(\varphi):=d_\varphi F(X_G)=
-i\int_0^1\big((\partial_{\varphi_1}F)(\partial_{\varphi_2}G)-(\partial_{\varphi_1}G)(\partial_{\varphi_2}F)\big)\,dx
\end{equation}
provided that the pairing given by the integral in \eqref{eq:poisson_bracket} is well-defined.
\end{Rem}

The Hamiltonian $\HH : H^1_c\to\C$ of the NLS equation is
\begin{equation}\label{eq:NLS-Hamiltonian}
\HH(\varphi):=\int_0^1\big( \varphi_{1x}\varphi_{2x}+\varphi_1^2\varphi_2^2\big)\,dx.
\end{equation}
By \eqref{eq:X_F} the corresponding Hamiltonian vector field is
\begin{eqnarray}
X_{\HH}(\varphi)&=&i\big(-\partial_{\varphi_2}\HH,\partial_{\varphi_1}\HH\big)\nonumber\\
&=&i\big(\varphi_{1xx}-2\varphi_1^2\varphi_2,-\varphi_{2xx}+2\varphi_1\varphi_2^2\big).\label{eq:X_H}
\end{eqnarray}
Clearly, $X_\HH : H^2_c\to L^2_c$ is an analytic map. The NLS equation is then written as
\begin{equation}\label{eq:NLS-complex}
\left\{
\begin{array}{l}
\dot{\varphi}_1=i\varphi_{1xx}-2i\varphi_1^2\varphi_2,\\
\dot{\varphi}_2=-i\varphi_{2xx}+2i\varphi_1\varphi_2^2.
\end{array}
\right.
\end{equation}
The phase space $L^2_c$ has two real subspaces 
\begin{equation*}
L^2_r:=\big\{\varphi\in L^2_c\,\big|\,\varphi_2=\overline{\varphi_1}\big\}\quad\text{and}\quad
i L^2_r:=\big\{\varphi\in L^2_c\,\big|\,\varphi_2=-\overline{\varphi_1}\big\}
\end{equation*}
so that $L^2_c=L^2_2\oplus_\R i L^2_r$. For any $s\in\R$ one also defines in a similar way the real subspaces
$H^s_r$ and $i H^s_r$ in $H^s_c$ so that $H^s_c=H^s_r\oplus_\R i H^s_r$.
It follows from \eqref{eq:NLS-Hamiltonian} that the Hamiltonian $\HH$ is real valued when restricted to $H^1_r$ and $i H^1_r$.
Moreover, one easily sees from \eqref{eq:NLS-complex} that the Hamiltonian vector field $X_{\HH}$ is ``tangent'' to the real 
subspaces $H^1_r$ and $i H^1_r$ so that the restrictions
\[
X_\HH\big|_{H^2_r} : H^2_r\to L^2_r\quad\text {and}\quad X_\HH\big|_{i H^2_r} : i H^2_r\to i L^2_r\,.
\]
are well-defined, and hence real analytic maps. The vector field $X_\HH\big|_{H^2_r}$ corresponds to the defocusing NLS equation 
and the vector field $X_\HH\big|_{i H^2_r}$ corresponds to the focusing NLS equation. 
This is consistent with the fact that the restriction of the symplectic structure $\omega$ to $L^2_r$ and $i L^2_r$ is real valued.
For the sake of convenience in what follows we drop the restriction symbols in $X_\HH\big|_{i H^2_r}$ and
$X_\HH\big|_{H^2_r}$ and simply write $X_\HH$ instead.

\medskip\medskip

\noindent {\em 2) Constant potentials.} For any given complex number $c\in\C$, $c\ne 0$, consider the constant potential 
\[
\varphi_c(x):=(c,-\overline{c})\in i L^2_r\cap i C^\infty_r\,.
\]
It follows from \eqref{eq:X_H} that 
\begin{equation}\label{eq:X_H'}
X_\HH(\varphi_c)=2i|c|^2\big(c,\overline{c}\big).
\end{equation}
Since this vector does not vanish we see that $\varphi_c$ is {\em not} a critical point of the NLS Hamiltonian 
\eqref{eq:NLS-Hamiltonian} and hence $d_{\varphi_c}\HH\ne 0$ in $(H^1_c)'$.

\medskip\medskip

\noindent{\em 3) The re-normalized Hamiltonian.}
In addition to the NLS Hamiltonian \eqref{eq:NLS-Hamiltonian} consider the Hamiltonian
\begin{equation}\label{eq:H_1}
\HH_1(\varphi):=-\int_0^1\varphi_1(x)\varphi_2(x)\,dx\,.
\end{equation}
Note that this is the first Hamiltonian appearing in the NLS hierarchy -- see e.g. \cite{GK1}.
The corresponding Hamiltonian vector field is
\begin{equation}\label{eq:X_H1}
X_{\HH_1}(\varphi)=i \big(\varphi_1,-\varphi_2\big).
\end{equation}
For any $s\in\R$ we have that $X_{\HH_1} : H^s_c\to H^s_c$ and hence $X_{\HH_1}$ is a (regular) vector field on
$H^s_c$. This vector field is tangent to the real submanifolds $H^s_r$ and $i H^s_r$ and induces the following 
one-parameter group of diffeomorphisms of $i H^s_r$, 
\begin{equation}\label{eq:symmetry}
S^t : i H^s_r\to i H^s_r,\quad\quad
\big(\varphi_1^0,\varphi_2^0\big)\stackrel{S^t}{\mapsto}\big(\varphi_1^0 e^{i t},\varphi_2^0 e^{-i t}\big).
\footnote{In what follows we will restrict our attention to the real space $i H^s_r$, $s\in\R$.}
\end{equation}
The transformations \eqref{eq:symmetry} preserves the vector field $X_\HH$, i.e. 
for any $t\in\R$ and for any $\varphi\in i H^2_r$
\begin{equation}\label{eq:preservation_of_X_H}
S^t\big(X_\HH(\varphi)\big)=X_\HH(S^t(\varphi))\,.
\end{equation}
It follows from \eqref{eq:X_H'} and \eqref{eq:X_H1} that
\begin{equation}\label{eq:dependence_relation}
X_\HH(\varphi_c)=2|c|^2 X_{\HH_1}(\varphi_c).
\end{equation}
We have the following

\begin{Lem}\label{lem:normalized_hamiltonian}
Let $c\in\C\setminus\{0\}$. Then one has:
\begin{itemize}
\item[(i)] The re-normalized Hamiltonian $\HH^c : i H^1_r\to\R$,
\begin{equation}\label{eq:normalized_hamiltonian}
\HH^c(\varphi):=\HH(\varphi)-2|c|^2\HH_1(\varphi),\quad\varphi\in i H^1_r,
\end{equation}
has a critical point at $\varphi_c$.
\item[(ii)] The curve $\gamma_c :\R\to i H^1_r$,
\begin{equation}\label{eq:the_critical_circle}
\gamma_c : t\mapsto\big(c e^{2i|c|^2 t},-\overline{c} e^{-2i|c|^2 t}\big),\quad t\in\R,
\end{equation}
is a solution of the NLS equation \eqref{eq:NLS-complex} with initial data at $\varphi_c$. This is a time periodic solution with period
$\pi/|c|^2$.
\item[(iii)] The range of the curve $\gamma_c$ consists of critical points of the Hamiltonian $\HH^c$.
\end{itemize}
\end{Lem}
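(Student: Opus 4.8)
The plan is to verify the three claims by direct computation, treating them in the order (ii), (i), (iii), since the dynamical statement in (ii) is the most concrete and the critical-point statements will follow from it together with the dependence relation \eqref{eq:dependence_relation}.

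First I would establish (ii). The curve $\gamma_c(t)=\big(ce^{2i|c|^2t},-\overline{c}\,e^{-2i|c|^2t}\big)$ is spatially constant, so all $x$-derivatives vanish and the NLS system \eqref{eq:NLS-complex} reduces to the pair of ODEs $\dot\varphi_1=-2i\varphi_1^2\varphi_2$ and $\dot\varphi_2=2i\varphi_1\varphi_2^2$. Substituting the ansatz and using $\varphi_1\varphi_2=-|c|^2$ along the curve, I would check that $\dot\varphi_1=2i|c|^2\,ce^{2i|c|^2t}=-2i\varphi_1^2\varphi_2$ and similarly for $\varphi_2$. At $t=0$ the curve passes through $\varphi_c$, and periodicity with period $\pi/|c|^2$ is immediate from the period of $e^{2i|c|^2t}$. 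Equivalently, since $X_\HH(\varphi_c)=2i|c|^2(c,\overline{c})=2|c|^2X_{\HH_1}(\varphi_c)$ by \eqref{eq:X_H'} and \eqref{eq:X_H1}, the curve $\gamma_c$ is precisely the orbit $S^{2|c|^2t}(\varphi_c)$ of the flow \eqref{eq:symmetry}, which by \eqref{eq:preservation_of_X_H} is an integral curve of $X_\HH$; this gives an alternative, more structural derivation that I would record.

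Next, for (i), I would compute the differential of $\HH^c=\HH-2|c|^2\HH_1$ at $\varphi_c$. By definition $X_{\HH^c}=X_\HH-2|c|^2X_{\HH_1}$, and the dependence relation \eqref{eq:dependence_relation} gives $X_{\HH^c}(\varphi_c)=X_\HH(\varphi_c)-2|c|^2X_{\HH_1}(\varphi_c)=0$. Since $X_{\HH^c}(\varphi)=i\big(-\partial_{\varphi_2}\HH^c,\partial_{\varphi_1}\HH^c\big)$ and the symplectic map $f\mapsto i(-f_2,f_1)$ is invertible, the vanishing of $X_{\HH^c}(\varphi_c)$ forces $\partial_\varphi\HH^c(\varphi_c)=0$, hence $d_{\varphi_c}\HH^c=0$; thus $\varphi_c$ is a critical point. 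Here one must be slightly careful that the argument lives in the real space $iH^1_r$ rather than in the complex phase space, so I would note that $\HH^c$ is real valued on $iH^1_r$ and that both $X_\HH$ and $X_{\HH_1}$ are tangent to $iH^1_r$, so the computation respects the real structure.

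Finally, (iii) follows by combining (i) and (ii) with the symmetry \eqref{eq:symmetry}. Both $\HH$ and $\HH_1$ are invariant under the flow $S^t$: the invariance of $\HH$ is equivalent to \eqref{eq:preservation_of_X_H}, and $\HH_1$ is invariant because $X_{\HH_1}$ generates $S^t$ itself. Hence $\HH^c$ is $S^t$-invariant, so $S^t$ maps critical points of $\HH^c$ to critical points. Since the range of $\gamma_c$ is exactly the $S^t$-orbit of the critical point $\varphi_c$ from part (i), every point of that orbit is critical. The main obstacle, if any, is purely bookkeeping: one must keep track of the isomorphism $\imath_s$ relating differentials in $(H^1_c)'$ and gradients in $H^{-1}_c$, and confirm that the invertibility of the map $f\mapsto i(-f_2,f_1)$ genuinely upgrades the vanishing of the Hamiltonian vector field to the vanishing of the differential; everything else is a short substitution.
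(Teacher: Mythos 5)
Your proposal is correct, and it rests on the same two ingredients as the paper's proof: the dependence relation \eqref{eq:dependence_relation} and the symmetry \eqref{eq:symmetry}. The differences are minor but worth recording. For (ii) your primary argument is direct substitution --- the curve is spatially constant, so the system \eqref{eq:NLS-complex} collapses to a pair of ODEs --- which is more elementary and self-contained; the paper instead runs exactly the structural argument you record as an alternative: since $S^t$ preserves both $X_\HH$ and $X_{\HH_1}$, the relation \eqref{eq:dependence_relation} propagates to $X_\HH\big(S^t(\varphi_c)\big)=2|c|^2X_{\HH_1}\big(S^t(\varphi_c)\big)$, and since $S^t(\varphi_c)$ is the integral curve of $X_{\HH_1}$ through $\varphi_c$, the reparametrized orbit $S^{2|c|^2t}(\varphi_c)$ integrates $X_\HH$. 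What the paper's route buys is that the propagated identity \eqref{eq:dependence_relation'} says precisely that $X_{\HH^c}$ vanishes along the orbit, so (iii) is an immediate corollary; your (iii) instead transports criticality of $\HH^c$ along the orbit via $S^t$-invariance of $\HH^c$, which also works but needs invariance of the Hamiltonians themselves, not merely equivariance of their vector fields. On that point, your claim that invariance of $\HH$ is ``equivalent to'' \eqref{eq:preservation_of_X_H} is slightly loose: preservation of $X_\HH$ by the symplectic maps $S^t$ only yields $\HH\circ S^t=\HH+\const(t)$, and the constant must still be killed, e.g.\ by evaluating at $\varphi=0$; alternatively, one simply observes that the integrands of $\HH$ and $\HH_1$ are manifestly unchanged under $(\varphi_1,\varphi_2)\mapsto(\varphi_1e^{it},\varphi_2e^{-it})$. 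This is a one-line fix, not a gap.
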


\begin{proofof}{Lemma \ref{lem:normalized_hamiltonian}}
Item $(i)$ follows directly from \eqref{eq:dependence_relation}. 
Since the symmetry \eqref{eq:symmetry} preserves both $X_\HH$ and $X_{\HH_1}$ we conclude from
\eqref{eq:dependence_relation} that for any $t\in\R$,
\begin{equation}\label{eq:dependence_relation'}
X_\HH\big(S^t(\varphi_c)\big)=2|c|^2 X_{\HH_1}\big(S^t(\varphi_c)\big).
\end{equation}
This together with the fact that $S^t(\varphi_c)$ is the integral curve of $X_{\HH_1}$ with initial data at $\varphi_c$
we conclude that $\gamma(t):=S_{2|c|^2t}(\varphi_c)$ is an integral curve of $X_\HH$ with initial data at $\varphi_c$.
This proves item $(ii)$. Item $(iii)$ follows from \eqref{eq:dependence_relation'}.
\end{proofof}

\section{The linearization of $X_{\HH^c}$ at $\varphi_c$ and its normal form}\label{sec:linearization}
In this Section we find the spectrum and the normal form of the linearized Hamiltonian vector field 
$X_{\HH^c} : i H^2_r\to i L^2_r$ at $\varphi_c\in i H^2_r$.
In view of Lemma \ref{lem:normalized_hamiltonian} the constant potential $\varphi_c$ is a singular point of
the vector field $X_{\HH^c}$, i.e. $X_{\HH^c}(\varphi_c)=0$. Moreover, by Lemma \ref{lem:normalized_hamiltonian} $(iii)$, 
the range of the periodic trajectory $\gamma_c$ consists of singular points of $X_{\HH^c}$.
It follows from \eqref{eq:X_H} and \eqref{eq:X_H1}  that for any $\varphi\in i H^2_r$,
\[
X_{\HH^c}(\varphi)= i
\left(
\begin{array}{c}
\varphi_{1xx}-2\varphi_1^2\varphi_2-2|c|^2\varphi_1\\
-\varphi_{2xx}+2\varphi_1\varphi_2^2+2|c|^2\varphi_2
\end{array}
\right).
\]
Hence, the linearized vector field $\big(dX_{\HH^c}\big)\big|_{\varphi=\varphi_c} : i H^2_r\to i L^2_r$ is given by
\begin{equation}\label{eq:linearization}
\big(dX_{\HH^c}\big)\big|_{\varphi=\varphi_c}
\left(
\begin{array}{c}
\delta\varphi_1\\
\delta\varphi_2
\end{array}
\right)
=i
\left(
\begin{array}{c}
(\delta\varphi_1)_{xx}+2|c|^2(\delta\varphi_1)-2c^2(\delta\varphi_2)\\
-(\delta\varphi_2)_{xx}+2\overline{c}^2(\delta\varphi_1)-2|c|^2(\delta\varphi_2)
\end{array}
\right)
\end{equation}
where $\big(\delta\varphi_1,\delta\varphi_2\big)\in i H^2_r$.
Since the symmetry \eqref{eq:symmetry} preserves $X_{\HH^c}$ and since for any $t\in\R$,
\[
S^t(\varphi_c)=\big(c e^{i t},-\overline{c}e^{-it}\big),
\]
the map $S^t$ conjugates the operator \eqref{eq:linearization} computed at $\varphi_c$ with the one 
computed at $\varphi_{c_t}$ with $c_t:=c e^{i t}$. More specifically, one has the following commutative diagram
\begin{equation}\label{eq:diagram}
\begin{tikzcd}
i H^2_r\arrow{r}{\LL_{c_t}}& i L^2_r\\
i H^2_r\arrow{u}{S^t}\arrow{r}{\LL_c}&i L^2_r\arrow[swap]{u}{S^t}
\end{tikzcd}
\end{equation}
where for simplicity of notation we denote
\[
\LL_c\equiv\big(dX_{\HH^c}\big)\big|_{\varphi=\varphi_c}.
\]
By choosing $t=-\arg(c)$ in the diagram above we obtain

\begin{Lem}\label{lem:c-real}
The operators $\LL_c$ and $\LL_{|c|}$ are conjugate.
\end{Lem}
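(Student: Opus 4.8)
The plan is to read the conjugacy directly off the commutative diagram \eqref{eq:diagram}, which already encodes the fact that the flow symmetry $S^t$ intertwines the linearizations at the one-parameter family of constant potentials $\varphi_{c_t}$, $c_t = ce^{it}$.

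First I would recall why that diagram commutes. Since $S^t$ preserves $X_\HH$ by \eqref{eq:preservation_of_X_H} and preserves $X_{\HH_1}$ (it is the time-$t$ flow of $X_{\HH_1}$), and since $|c_t| = |ce^{it}| = |c|$ leaves the renormalization coefficient $2|c|^2$ in $\HH^c$ unchanged, it follows that $S^t$ preserves the renormalized vector field $X_{\HH^c} = X_\HH - 2|c|^2 X_{\HH_1}$; moreover $\HH^{c_t} = \HH^c$ so that $\LL_{c_t}$ is genuinely the linearization of the same Hamiltonian vector field, now taken at $\varphi_{c_t}$. As $S^t$ is linear and sends $\varphi_c$ to $\varphi_{c_t} = S^t(\varphi_c)$, differentiating the intertwining relation $S^t\circ X_{\HH^c} = X_{\HH^c}\circ S^t$ at $\varphi_c$ (and using $dS^t = S^t$ by linearity) yields
\[
S^t\circ\LL_c = \LL_{c_t}\circ S^t.
\]

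Next, since for each $s$ the map $S^t : i H^s_r\to i H^s_r$ is a linear isomorphism with inverse $S^{-t}$, I would rewrite this as
\[
\LL_{c_t} = S^t\circ\LL_c\circ S^{-t},
\]
exhibiting $\LL_{c_t}$ and $\LL_c$ as conjugate operators for every $t\in\R$. Finally I would specialize to $t = -\arg(c)$, so that $c_t = ce^{-i\arg(c)} = |c|$, obtaining
\[
\LL_{|c|} = S^{-\arg(c)}\circ\LL_c\circ S^{\arg(c)},
\]
which is exactly the asserted conjugacy.

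There is no genuine obstacle here: the entire substance of the argument already resides in the symmetry relation encoded in \eqref{eq:diagram}, and the only thing to verify is the elementary bookkeeping identities $|c_t| = |c|$ (so that $\HH^{c_t} = \HH^c$) and $c_{-\arg(c)} = |c|$ (so that the correct member of the family is singled out). The statement is thus an immediate corollary of the preceding discussion rather than a result requiring a separate computation.
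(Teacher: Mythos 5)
Your proof is correct and is essentially the paper's own argument: the paper obtains the lemma precisely by setting $t=-\arg(c)$ in the commutative diagram \eqref{eq:diagram}, exactly as you do. The only difference is that you spell out why the diagram commutes (invariance of $X_{\HH^c}$ under $S^t$, the identity $\HH^{c_t}=\HH^c$ since $|c_t|=|c|$, and linearity of $S^t$), details the paper leaves implicit in the discussion preceding the diagram.
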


With this in mind, in what follows we will assume without loss of generality that $c$ is real. In this case
\begin{equation}\label{eq:L_c}
\LL_c= i 
\begin{pmatrix}
1&0\\
0&-1
\end{pmatrix}
\partial_x^2+2 i c^2
\begin{pmatrix}
1&-1\\
1&-1
\end{pmatrix},
\quad c\in\R.
\end{equation}
For any $k\in\Z$ consider the vectors
\begin{equation}\label{eq:basis1}
\xi_k:=
\begin{pmatrix}
1\\
0
\end{pmatrix}
e^{2\pi i k x}
\quad\text{and}\quad
\eta_k:=
\begin{pmatrix}
0\\
1
\end{pmatrix}
e^{2\pi i k x}.
\end{equation}
The system of vectors $\big\{(\xi_k,\eta_k)\big\}_{k\in\Z}$ give an orthonormal basis in the complex Hilbert space $L^2_c$ so that
for any $\varphi=(\varphi_1,\varphi_2)\in L^2_c$,
\[
\varphi=\sum_{k\in\Z}\big(z_k\xi_k+w_k\eta_k\big),
\]
where $z_k:=\widehat{(\varphi_1)}_k$ and $w_k:=\widehat{(\varphi_2)}_k$. 
Denote $\ell^2_c:=\ell^2_\C\times\ell^2_\C$ where $\ell^2_\C\equiv\ell^2(\Z,\C)$ is the space of square summable sequences 
of complex numbers. In this way, $\big\{(z_k,w_k)\big\}_{k\in\Z}\in\ell^2_c$ are coordinates in $L^2_c$. In these coordinates,
the real subspace $i L^2_r$ is characterized by the condition that $\forall k\in\Z$, $w_k=-\overline{(z_{-k})}$, and the real subspace
$L^2_r$ is characterized by the condition that $\forall k\in\Z$, $w_k=\overline{(z_{-k})}$. We denote the corresponding spaces of 
sequences respectively by $i\ell^2_r$ and $\ell^2_r$.
It follows from \eqref{eq:symplectic_structure} that for any $k,l\in\Z$ one has 
\[
\omega\big(\xi_k,\xi_l\big)=\omega\big(\eta_k,\eta_l\big)=0\quad\text{and}\quad
\omega\big(\xi_k,\eta_{-l}\big)=-i\delta_{kl}
\]
where $\delta_{kl}$ is the Kronecker delta.
In addition to the vectors in \eqref{eq:basis1} consider for $k\in\Z$ the vectors
\begin{equation}\label{eq:basis2}
\xi_k':=\frac{1}{\sqrt{2}}\big(\xi_k-\eta_{-k}\big)=\frac{1}{\sqrt{2}}
\begin{pmatrix}
e^{2\pi k i x}\\
-e^{-2\pi k i x}
\end{pmatrix},
\quad
\eta_k':=\frac{i}{\sqrt{2}}\big(\xi_k+\eta_{-k}\big)= \frac{i}{\sqrt{2}}
\begin{pmatrix}
e^{2\pi k i x}\\
e^{-2\pi k i x}
\end{pmatrix}.
\end{equation}
Note that the system of vectors $\big\{\xi_k',\eta_k'\big\}_{k\in\Z}$ form an orthonormal basis in the real subspace $i L^2_r$. 
In addition, this is a {\em Darboux basis} in $i L^2_r$ with respect to the restriction of the symplectic structure 
\eqref{eq:symplectic_structure} to $i L^2_r$, i.e. for any $k,l\in\Z$ one has
\[
\omega\big(\xi_k',\xi_l'\big)=\omega\big(\eta_k',\eta_l'\big)=0\quad\text{and}\quad 
\omega\big(\xi_k',\eta_l'\big)=\delta_{kl}.
\] 
Moreover, for any $\varphi\in L^2_c$,
\[
\varphi=\sum_{k\in\Z}\big(x_k\xi_k'+y_k\eta_k' \big),
\]
where $\big\{(x_k,y_k)\}_{k\in\Z}$ are coordinates in $L^2_c$ so that the real subspace $i L^2_r$ is characterized by the condition
that $\big\{(x_k,y_k)\}_{k\in\Z}\in\ell^2(\Z,\R)\times\ell^2(\Z,\R)$. For any $k\in\Z$,
\[
x_k=\frac{1}{\sqrt{2}}\big(z_k-w_{-k}\big)\quad\text{and}\quad y_k=\frac{1}{i\sqrt{2}}\big(z_k+w_{-k}\big).
\]
Finally, consider the 2-(complex)dimensional subspaces in $L^2_c$,
\begin{equation}\label{eq:V_k}
V_k^\C:=\Span_\C\langle\xi_k,\eta_k \rangle,\quad k\in\Z,
\end{equation}
together with the 4-(real)dimensional {\em symplectic subspaces} in $i L^2_r$,
\begin{equation}\label{eq:W_k}
W_k^\R:=\Span_\R\langle\xi_k',\eta_k',\xi_{-k}',\eta_{-k}'\rangle,\quad k\in\Z_{\ge 1}.
\end{equation}
and
\begin{equation}\label{eq:W_0}
W_0^\R:=\Span_\R\langle\xi_0',\eta_0'\rangle.
\end{equation}
It follows from \eqref{eq:basis2} that for any $k\in\Z_{\ge 1}$,
\begin{equation}\label{eq:complexification_property}
W_k^\R\otimes\C=V_k^\C\oplus_\C V_{-k}^\C\quad\text{and}\quad W_0^\R\otimes\C=V_0^\C.
\end{equation}
Since $\{\xi_k',\eta_k' \}_{k\in\Z}$ is an orthonormal basis of $i L^2_r$,
\[
i L^2_r=\bigoplus_{k\in\Z_{\ge 0}}W_k^\R
\]
is a decomposition of $i L^2_r$ into $L^2$-orthogonal real subspaces.
We have the following

\begin{Th}\label{th:the_spectrum_of_X_H}
For any $c\in\R$, $c\notin \pi\Z$, the operator 
\[
\LL_c\equiv\big(dX_{\HH^c}\big)\big|_{\varphi=\varphi_c} : i H^2_r\to i L^2_r,
\]
has a compact resolvent. In particular, the spectrum of $\LL_c$ is discrete and has the following properties:
\begin{itemize}
\item[(i)] The spectrum of $\LL_c$ consists of $\lambda_0=0$ and
\begin{equation}\label{eq:the_spectrum_of_L_c}
\lambda_k=\left\{
\begin{array}{lc}
4\pi k\sqrt[+]{|c|^2-\pi^2k^2},&0<\pi|k|<|c|,\\
4\pi i k \sqrt[+]{\pi^2k^2-|c|^2},&\pi|k|>|c|,
\end{array}
\right.
\end{equation}
for any integer $k\in\Z\setminus\{0\}$.
The eigenvalue $\lambda_0$ has algebraic multiplicity two and geometric multiplicity one; for any $k\in\Z\setminus\{0\}$ 
the eigenvalue $\lambda_k$ has algebraic multiplicity two and geometric multiplicity two.

\item[(ii)] For any $k\in\Z$ the complex linear space $V_k^\C$ (see \eqref{eq:V_k}) is an invariant space of $\LL_c$ 
in $L^2_c$ and 
\begin{equation}\label{eq:L_k}
\LL_c\big|_{V_k^\C}= i
\begin{pmatrix}
2|c|^2-4\pi^2k^2&-2|c|^2\\
2|c|^2&4\pi^2 k^2-2|c|^2
\end{pmatrix}
\end{equation}
in the basis of $V_k^\C$ given by $\xi_k$ and $\eta_k$. If $0<\pi |k|<|c|$ the matrix \eqref{eq:L_k} has two real eigenvalues 
$\pm4\pi|k|\sqrt[+]{|c|^2-\pi^2k^2}$ and  if $|c|<|k|\pi$ it has two purely imaginary complex eigenvalues
$\pm4\pi i |k|\sqrt[+]{\pi^2k^2-|c|^2}$.

\item[(iii)] For any $k\in\Z_{\ge 1}$ the real symplectic space $W_k^\R$ (see \eqref{eq:W_k}) is an invariant space
of $\LL_c$ in $i L^2_r$. When written in the basis $\big\{\xi_k,\eta_k,\xi_{-k},\eta_{-k}\big\}$ of
the complexification of $W_k^\R$ the matrix representation of of the operator $\LL_c\big|_{W_k^\R}$ consists of 
two diagonal square blocks of the form \eqref{eq:L_k}. The real symplectic space $W_0^\R$ is an invariant space of 
the operator $\LL_c$ in $i L^2_r$ and
\[
\LL_c\big|_{W_0^\R}=2i |c|^2
\begin{pmatrix}
1&-1\\
1&-1
\end{pmatrix}
\]
when written in the basis $\big\{\xi_0,\eta_0\big\}$ of the complexification of $W_0^\R$. Zero is a double eigenvalue of this
matrix with geometric multiplicity one.
\end{itemize}
\end{Th}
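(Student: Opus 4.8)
The plan is to exploit that, after the reduction to real $c$ furnished by Lemma \ref{lem:c-real}, the operator $\LL_c$ in \eqref{eq:L_c} has \emph{constant coefficients}: both the differential part $i\,\mathrm{diag}(1,-1)\partial_x^2$ and the zeroth-order matrix commute with translations, so $\LL_c$ is a matrix-valued Fourier multiplier and leaves every Fourier mode invariant. Concretely, on a function $(a,b)e^{2\pi ikx}$ the operator $\partial_x^2$ acts by multiplication by $-4\pi^2k^2$, whence $\LL_c$ maps $V_k^\C=\Span_\C\langle\xi_k,\eta_k\rangle$ into itself and, in the basis $\{\xi_k,\eta_k\}$, is represented by the matrix obtained from \eqref{eq:L_c} via the substitution $\partial_x^2\mapsto-4\pi^2k^2$. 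Carrying out this substitution reproduces exactly \eqref{eq:L_k}, which proves the invariance assertion in $(ii)$ and, since \eqref{eq:L_k} depends on $k$ only through $k^2$, shows that $\LL_c|_{V_k^\C}$ and $\LL_c|_{V_{-k}^\C}$ are the \emph{same} matrix.

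Next I would diagonalise the $2\times2$ matrix \eqref{eq:L_k}. Setting $A:=2c^2-4\pi^2k^2$ and $B:=2c^2$, it equals $i\bigl(\begin{smallmatrix}A&-B\\B&-A\end{smallmatrix}\bigr)$, so it has trace $0$ and determinant $A^2-B^2$; its eigenvalues therefore solve $\lambda^2=B^2-A^2=(B-A)(B+A)=4\pi^2k^2\cdot4(c^2-\pi^2k^2)=16\pi^2k^2(c^2-\pi^2k^2)$. For $0<\pi|k|<|c|$ this yields the two real eigenvalues $\pm4\pi|k|\sqrt[+]{|c|^2-\pi^2k^2}$, and for $\pi|k|>|c|$ the two purely imaginary eigenvalues $\pm4\pi i|k|\sqrt[+]{\pi^2k^2-|c|^2}$. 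The hypothesis $c\notin\pi\Z$ guarantees $c^2\neq\pi^2k^2$, hence these eigenvalues are nonzero and distinct and every such block is diagonalisable; this establishes the eigenvalue part of $(ii)$. The case $k=0$ is special: there $A=B=2c^2$, the matrix becomes $2ic^2\bigl(\begin{smallmatrix}1&-1\\1&-1\end{smallmatrix}\bigr)$, whose square vanishes, so $0$ is a double eigenvalue with a single Jordan block, i.e. geometric multiplicity one; this is exactly the assertion about $W_0^\R$ in $(iii)$.

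For the real symplectic blocks in $(iii)$ with $k\ge1$ I would pass through the complexification. By \eqref{eq:complexification_property}, $W_k^\R\otimes\C=V_k^\C\oplus_\C V_{-k}^\C$, and both summands are $\LL_c$-invariant by the first step, so in the basis $\{\xi_k,\eta_k,\xi_{-k},\eta_{-k}\}$ the representing matrix is block diagonal with two blocks of the form \eqref{eq:L_k}. That $W_k^\R$ itself (not merely its complexification) is invariant follows because $\LL_c$ maps $iH^2_r$ into $iL^2_r$, i.e. its complex-linear extension commutes with the anti-linear involution $\sigma$ whose fixed-point set is $iL^2_r$; since $\sigma$ exchanges $V_k^\C$ and $V_{-k}^\C$ and $W_k^\R$ is precisely the set of $\sigma$-fixed points of $V_k^\C\oplus V_{-k}^\C$, invariance descends. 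Assembling blocks via $L^2_c=\bigoplus_{k\in\Z}V_k^\C$, the spectrum of $\LL_c$ is the union of the block spectra, i.e. the list \eqref{eq:the_spectrum_of_L_c}. Each nonzero $\lambda_k$, being an eigenvalue of both $\LL_c|_{V_k^\C}$ and $\LL_c|_{V_{-k}^\C}$ (which coincide), contributes one eigenvector from each, giving algebraic and geometric multiplicity two, whereas $\lambda_0=0$ arises only from the single nilpotent block $V_0^\C$, giving algebraic multiplicity two and geometric multiplicity one, which completes $(i)$.

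The remaining and most delicate point is the compact resolvent, which cannot be read off from self-adjointness: the coexistence of real and purely imaginary eigenvalues shows that $\LL_c$ is non-normal. I would write $\LL_c=\LL_c^0+B$ with $\LL_c^0:=i\,\mathrm{diag}(1,-1)\partial_x^2:iH^2_r\to iL^2_r$ and $B:=2ic^2\bigl(\begin{smallmatrix}1&-1\\1&-1\end{smallmatrix}\bigr)$ bounded on $iL^2_r$. The operator $\LL_c^0$ is skew-adjoint with purely imaginary spectrum $\{\pm4\pi^2ik^2\}$; for any $\lambda$ off this set its resolvent maps $iL^2_r$ boundedly into $iH^2_r$, which embeds compactly into $iL^2_r$, so $\LL_c^0$ has compact resolvent, and a bounded perturbation of an operator with compact resolvent again has compact resolvent. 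Equivalently, one may argue directly from the block decomposition that for $\lambda$ off the eigenvalue list the block resolvents $(\LL_c|_{V_k^\C}-\lambda)^{-1}$ have operator norms of order $k^{-2}$ as $|k|\to\infty$, exhibiting the resolvent as a norm limit of finite-rank operators. Either route yields discreteness of the spectrum and finishes the proof. The main obstacle throughout is precisely this non-self-adjointness, which forces the perturbation-theoretic (or explicit block-decay) argument for compactness and demands care in the Jordan-block bookkeeping at $\lambda_0$ and, through the hypothesis $c\notin\pi\Z$, in ensuring that every nonzero block is diagonalisable.
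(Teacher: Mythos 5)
Your proof is correct and takes exactly the route the paper does: the paper's entire proof is the one-sentence observation that everything follows from the matrix representation of $\LL_c$ in the basis $\{\xi_k,\eta_k\}$ of $V_k^\C$, which is precisely the block-diagonal Fourier-multiplier computation you carry out. You simply supply the details the paper leaves to the reader --- the $2\times 2$ eigenvalue computation, the $\sigma$-fixed-point argument for the invariance of $W_k^\R$, and the bounded-perturbation (or block-decay) argument for compactness of the resolvent.
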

\begin{proofof}{Theorem \ref{th:the_spectrum_of_X_H}}
The proof of this Theorem follows directly from the matrix representation of $\LL_c$ when computed in the basis
of $V_k^\C$ given by the vectors $\xi_k$ and $\eta_k$.
\end{proofof}

Theorem \ref{th:the_spectrum_of_X_H} implies that the linearized vector field $(dX_{\HH^c})\big|_{\varphi=\varphi_c}$
has the following {\em normal form}. 

\begin{Th}\label{th:normal_form}
Assume that $c\in\R$ and $c\notin \pi\Z$. We have:
\begin{itemize} 
\item[(i)] The vectors $\alpha_0:=\xi_0'$ and $\beta_0:=\eta_0'$ form a Darboux basis of $W_0^\R\subseteq i L^2_r$ such that
\begin{equation}\label{eq:L-drift}
\LL_c\big|_{W_0^\R}=4|c|^2
\begin{pmatrix}
0&0\\
1&0
\end{pmatrix}.
\end{equation}
\item[(ii)] For any $k\in\Z_{\ge 1}$ there exists a Darboux basis $\big\{\alpha_k,\beta_k,\alpha_{-k},\beta_{-k}\big\}$ 
in $W_k^\R\subseteq i L^2_r$ such that\footnote{Here $\omega(\alpha_k,\beta_k)=\omega(\alpha_{-k},\beta_{-k})=1$ 
while all other skew-symmetric products between these vectors vanish.} for $0<\pi k<|c|$,
\begin{equation}\label{eq:L-focus_focus}
\LL_c\big|_{W_k^\R}=4\pi k\sqrt[+]{|c|^2-\pi^2k^2}
\begin{pmatrix}
1&0&0&0\\
0&-1&0&0\\
0&0&1&0\\
0&0&0&-1
\end{pmatrix},
\end{equation}
and for $\pi k>|c|$,
\begin{equation}\label{eq:L-center}
\LL_c\big|_{W_k^\R}=4\pi k\sqrt[+]{\pi^2k^2-|c|^2}
\begin{pmatrix}
0&1&0&0\\
-1&0&0&0\\
0&0&0&1\\
0&0&-1&0
\end{pmatrix}.
\end{equation}
In addition, one has the following uniform in $k\in\Z$ with $\pi k>|c|$ estimates
\begin{equation}\label{eq:basis_asymptotics}
\left\{
\begin{array}{l}
\alpha_k=\xi_k'+O(1/k^2),\quad\beta_k=\eta_k'+O(1/k^2),\\
\alpha_{-k}=\xi_{-k}'+O(1/k^2),\quad\beta_{-k}=\eta_{-k}'+O(1/k^2),
\end{array}
\right.
\end{equation}
where  $\big\{\xi_k',\eta_k'\}_{k\in\Z}$ is the orthonormal Darboux basis \eqref{eq:basis2} in $i L^2_r$.
\end{itemize}
\end{Th}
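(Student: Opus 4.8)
The plan is to diagonalize $\LL_c$ one invariant block at a time, exploiting the $\omega$-orthogonal decomposition $i L^2_r=\bigoplus_{k\ge 0}W_k^\R$ together with the explicit $2\times2$ matrices \eqref{eq:L_k}, and to keep track of the symplectic form by means of a single structural fact: $\LL_c$ is \emph{infinitesimally symplectic}. Indeed, $\LL_c$ is the linearization of $X_{\HH^c}$ at the critical point $\varphi_c$ (Lemma \ref{lem:normalized_hamiltonian}$(i)$), hence the Hamiltonian vector field of the Hessian $\tfrac12 d^2_{\varphi_c}\HH^c$, so $\omega(\LL_c u,v)+\omega(u,\LL_c v)=0$ for all $u,v$ (equivalently, this is checked directly from \eqref{eq:L_k} and the listed pairings). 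Evaluated on eigenvectors with eigenvalues $\lambda,\mu$ this gives $(\lambda+\mu)\,\omega(u,v)=0$, so generalized eigenspaces are $\omega$-orthogonal whenever $\lambda+\mu\neq0$. I will also use the reality involution $\sigma$ defining $i L^2_r$, which commutes with $\LL_c$ and interchanges $V_k^\C$ and $V_{-k}^\C$ (concretely $\sigma\xi_k=-\eta_{-k}$, $\sigma\eta_k=-\xi_{-k}$); consequently the real points of $W_k^\R$ are exactly the vectors $u+\sigma(u)$ with $u\in V_k^\C$. The whole statement is thus an explicit, block-by-block instance of the Williamson classification \cite{Wi}.

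For part $(i)$ I would simply compute from \eqref{eq:basis2} and $\LL_c\big|_{V_0^\C}=2i|c|^2\bigl(\begin{smallmatrix}1&-1\\1&-1\end{smallmatrix}\bigr)$ that $\LL_c\xi_0'=4|c|^2\eta_0'$ and $\LL_c\eta_0'=0$; since $\omega(\xi_0',\eta_0')=1$, putting $\alpha_0=\xi_0'$, $\beta_0=\eta_0'$ yields \eqref{eq:L-drift} at once. For the hyperbolic blocks $0<\pi k<|c|$ the real operator $\LL_c\big|_{W_k^\R}$ has the two real eigenvalues $\pm\mu_k$, $\mu_k=4\pi k\sqrt[+]{|c|^2-\pi^2k^2}$, each of geometric multiplicity two by Theorem \ref{th:the_spectrum_of_X_H}; the real eigenspaces $E_\pm\subset W_k^\R$ are obtained from the eigenvectors of \eqref{eq:L_k} in $V_k^\C$ via $u+\sigma(u)$ and $i(u-\sigma(u))$. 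Infinitesimal symplecticity forces $E_+$ and $E_-$ to be Lagrangian (as $2\mu_k\neq0$) and to be paired non-degenerately by $\omega$. I would then choose any basis $\alpha_k,\alpha_{-k}$ of $E_+$ and take $\beta_k,\beta_{-k}$ to be its $\omega$-dual basis in $E_-$, i.e. $\omega(\alpha_i,\beta_j)=\delta_{ij}$; since $\LL_c$ acts by $\pm\mu_k$ on $E_\pm$, this Darboux basis realizes \eqref{eq:L-focus_focus}.

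The center blocks $\pi k>|c|$ are the delicate case, and this is where I expect the real work to lie. The eigenvalues are $\pm i\nu_k$, $\nu_k=4\pi k\sqrt[+]{\pi^2k^2-|c|^2}$; taking the eigenvector $w=2|c|^2\,\xi_k+(2|c|^2-4\pi^2k^2-\nu_k)\,\eta_k\in V_k^\C$ for $i\nu_k$ and setting $\beta_k:=w+\sigma(w)$, $\alpha_k:=i\bigl(w-\sigma(w)\bigr)\in W_k^\R$ gives $\LL_c\alpha_k=-\nu_k\beta_k$, $\LL_c\beta_k=\nu_k\alpha_k$, the rotation block of \eqref{eq:L-center}; the second block $\{\alpha_{-k},\beta_{-k}\}$ is built analogously from the $-i\nu_k$ eigenvector of \eqref{eq:L_k}, and the two blocks are $\omega$-orthogonal because $V_{\pm k}^\C$ are isotropic and the eigenvalues pair as in the orthogonality lemma. \textbf{The main obstacle is fixing the sign}: the coefficient in \eqref{eq:L-center} is $+\nu_k$ and not $-\nu_k$, which is legitimate only if $\omega(\alpha_k,\beta_k)>0$ (the Krein signature of the elliptic pair), and infinitesimal symplecticity alone does not decide this. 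One must therefore evaluate $\omega(\alpha_k,\beta_k)=2i\,\omega(w,\sigma(w))$ explicitly; using $\omega(\xi_k,\eta_{-k})=-i$ and $\omega(\eta_k,\xi_{-k})=i$ this reduces to $2\bigl[(2|c|^2-4\pi^2k^2-\nu_k)^2-(2|c|^2)^2\bigr]$, which factors as $2\,(a-\nu_k-b)(a-\nu_k+b)$ with $a-\nu_k-b<0$ and $a-\nu_k+b<0$ (where $a=2|c|^2-4\pi^2k^2$, $b=2|c|^2$), hence is strictly positive; the analogous expression for the second block is $2\bigl[(2|c|^2)^2-(2|c|^2-4\pi^2k^2+\nu_k)^2\bigr]>0$ by the same kind of estimate. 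Rescaling each eigenvector by a positive real constant then normalizes $\omega(\alpha_{\pm k},\beta_{\pm k})=1$, so $\{\alpha_{\pm k},\beta_{\pm k}\}$ is Darboux and \eqref{eq:L-center} holds.

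Finally, for the asymptotics \eqref{eq:basis_asymptotics} I would note that in \eqref{eq:L_k} the diagonal entries grow like $4\pi^2k^2$ while the off-diagonal coupling $2|c|^2$ stays $O(1)$, so the eigenvectors $w$ and $\sigma(w)$ are parallel to single coordinate axes up to a relative error $O(1/k^2)$ (standard first-order eigenvector perturbation, the eigenvalue gap being $\sim 8\pi^2k^2$). Passing to $u+\sigma(u)$, $i(u-\sigma(u))$, inserting the normalizing constants, which tend to $1$, and matching the labels through \eqref{eq:basis2}, then yields the claimed $O(1/k^2)$ convergence of $\{\alpha_k,\beta_k,\alpha_{-k},\beta_{-k}\}$ to the orthonormal Darboux basis $\{\xi_k',\eta_k',\xi_{-k}',\eta_{-k}'\}$, completing part $(ii)$.
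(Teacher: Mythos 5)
Your strategy is sound, and most of it checks out against the paper; where it differs, the difference is instructive. For the hyperbolic blocks ($0<\pi k<|c|$) the paper constructs explicit eigenfunctions $F_{\pm k},G_{\pm k}$ normalized by a carefully chosen branch $\varkappa_k$ and verifies every pairing by hand, whereas you argue abstractly: infinitesimal symplecticity makes the real eigenspaces $E_\pm$ Lagrangian and $\omega$-dually paired, so any basis of $E_+$ together with its $\omega$-dual basis of $E_-$ is Darboux and realizes \eqref{eq:L-focus_focus}. That is shorter and dispenses with all normalization constants; nothing is lost, since the asymptotics \eqref{eq:basis_asymptotics} are only claimed for the elliptic range $\pi k>|c|$, and there are only finitely many hyperbolic blocks. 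Part $(i)$ is the same two-line computation as in the paper. For the elliptic blocks your route and the paper's coincide in substance: your positivity check $\omega(\alpha_k,\beta_k)=2i\,\omega(w,\sigma(w))=2\big[(2|c|^2-4\pi^2k^2-\nu_k)^2-(2|c|^2)^2\big]>0$ is exactly the content of the paper's normalization $\omega(F_k,\sigma(F_k))=-2i$, i.e.\ the Krein signature, and you are right that this is the one point no soft argument can settle. On the asymptotics, note that your (rescaled) first pair converges to $(-\eta_{-k}',\xi_{-k}')$, not to $(\xi_k',\eta_k')$, so the relabeling you allude to is genuinely needed to get \eqref{eq:basis_asymptotics} verbatim; it does go through.

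The one place you must tighten the argument is, ironically, the place you flagged as the crux: the second elliptic pair. If you build it \emph{literally} analogously, i.e.\ from the $-i\nu_k$ eigenvector $u=2|c|^2\xi_k+(2|c|^2-4\pi^2k^2+\nu_k)\eta_k$ of \eqref{eq:L_k} with the same assignment $\beta_{-k}:=u+\sigma(u)$, $\alpha_{-k}:=i\big(u-\sigma(u)\big)$, then \emph{both} signs come out opposite: one gets $\LL_c\alpha_{-k}=+\nu_k\beta_{-k}$, $\LL_c\beta_{-k}=-\nu_k\alpha_{-k}$ (minus the rotation block of \eqref{eq:L-center}), and $\omega(\alpha_{-k},\beta_{-k})=2\big[(2|c|^2-4\pi^2k^2+\nu_k)^2-(2|c|^2)^2\big]$, which is \emph{negative} --- the negative of the quantity you displayed --- so rescaling by a positive constant cannot produce \eqref{eq:L-center}. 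The two defects cancel: negating one basis vector, or equivalently using the swapped assignment $\alpha_{-k}:=u+\sigma(u)$, $\beta_{-k}:=i\big(u-\sigma(u)\big)$ (for which your displayed expression \emph{is} the pairing, and is positive), repairs both at once. Cleanest of all, and what the paper in effect does with its $F_{-k}$: build the second pair from the $+i\nu_k$ eigenvector of $\LL_c\big|_{V_{-k}^\C}$, so the computation is word-for-word the one you already did for the first pair. Finally, beware of one reading that genuinely fails: the $-i\nu_k$ eigenvector \emph{in} $V_{-k}^\C$ is proportional to $\sigma(w)$, so it spans the same real two-plane as your first pair and yields no basis of $W_k^\R$ at all.
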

Recall that $\h^s(\Z,\R)$ with $s\in\R$ denotes the Hilbert space of sequences of real numbers $(a_k)_{k\in\Z}$ so that 
$\sum_{k\in\Z}\langle k\rangle^{2s}|a_k|^2<\infty$ where $\langle k\rangle:=\sqrt{1+|k|^2}$.
We will also need the Banach space $\ell^1_s(\Z,\R)$ of sequences of real numbers $(a_k)_{k\in\Z}$ so that
$\sum_{k\in\Z}\langle k\rangle^s|a_k|<\infty$.

\begin{Rem}\label{rem:darboux_basis}
Note that the asymptotics \eqref{eq:basis_asymptotics} imply (see e.g. \cite[Section 22.5]{Lax}) that for any $s\in\R$
the system $\big\{\alpha_k,\beta_k\big\}_{k\in\Z}$ is a basis in $i H^s_r$ in the sense that
for any $\varphi\in i H^s_r$ there exists a unique sequence $\big\{(p_k,q_k)\big\}_{k\in\Z}$ in
$\h^s(\Z,\R)\times\h^s(\Z,\R)$ such that $\varphi=\sum_{k\in\Z}\big(p_k\alpha_k+q_k\beta_k\big)$ where 
the series converges in $i H^s_r$ and the mapping
\[
i H^s_r\to\h^s(\Z,\R)\times\h^s(\Z,\R),\quad\varphi\mapsto\big\{(p_k,q_k)\big\}_{k\in\Z},
\]
is an isomorphism. Note that $\big\{\alpha_k,\beta_k\big\}_{k\in\Z}$ is a Darboux basis that is {\em not} an orthonormal
basis in $i L^2_r$.
\end{Rem}

\begin{proofof}{Theorem \ref{th:normal_form}}
Item $(i)$ follows by a direct computation in the basis of $W_0^\R$ provided by the vectors 
$\alpha_0:=\xi_0'$ and $\beta_0:=\eta_0'$ (see \eqref{eq:basis2}).
Towards proving item $(ii)$, we first consider the case when $\pi k>|c|$. Denote for simplicity
\[
L_{\pm k}:=\LL_c\big|_{V_{\pm k}^\C},\quad a_k:=4\pi^2k^2-2|c|^2,\quad b:=2|c|^2,
\]
and note that $a_k^2-b^2=16\pi^2 k^2\big(\pi^2 k^2-|c|^2\big)>0$.
Then, in view of \eqref{eq:L_k},
\[
L_k= i
\begin{pmatrix}
-a_k&-b\\
b&a_k
\end{pmatrix}
\quad\text{and}\quad
L_{-k}= L_k
\]
in the basis of $V_m^\C$ given by $\big\{\xi_m,\eta_m\big\}$ for $m=\pm k$.
Denote, by $\varkappa_k$ the positive square root of the quantity
\[
\varkappa_k^2=\sqrt[+]{a_k^2-b^2}\Big(a_k+\sqrt[+]{a_k^2-b^2}\Big).
\]
It follows from Theorem \ref{th:the_spectrum_of_X_H} $(ii)$ and \eqref{eq:L_k} that 
\begin{equation}\label{eq:eigenvectorF+}
F_k:=-\frac{1}{\varkappa_k}
\begin{pmatrix}
-b\\
a_k+\sqrt[+]{a_k^2-b^2}
\end{pmatrix}
e^{-2\pi i k x}
\end{equation}
and
\begin{equation}\label{eq:eigenvectorF-}
F_{-k}:=-\frac{1}{\varkappa_k}
\begin{pmatrix}
-b\\
a_k+\sqrt[+]{a_k^2-b^2}
\end{pmatrix}
e^{2\pi i k x}
\end{equation}
are linearly independent eigenfunctions of the restriction of $\LL_c$ to the invariant space 
$V_k^\C\oplus_\C V_{-k}^\C=W_k^\R\otimes\C$ with eigenvalue
\[
\lambda_k=i\sqrt[+]{a_k^2-b^2}=4\pi i k\sqrt[+]{\pi^2 k^2-|c|^2}.
\]
The eigenfunctions have been normalized in a way convenient for our purposes.
Denote by $\sigma$ the {\em complex conjugation} in $L^2_c$ corresponding to the real subspace $i L^2_r$,
\begin{equation}\label{eq:conjugation1}
\sigma : L^2_c\to L^2_c,\quad\quad(\varphi_1,\varphi_2)\mapsto(-\overline{\varphi_2},-\overline{\varphi_1}).
\end{equation}

\begin{Rem}\label{rem:complex_conjugation}
One easily sees from \eqref{eq:conjugation1} that $\sigma\big|_{i L^2_r}=\id_{i L^2_r}$ and 
$\sigma\big|_{L^2_r}=-\id_{L^2_r}$. This implies that for any $\alpha,\beta\in i L^2_r$,
\begin{equation}\label{eq:conjugation2}
\sigma(\alpha+i\beta)=\alpha-i\beta.
\end{equation} 
Since the operator $\LL_c$ is real (i.e., $\LL_c :  i H^2_r\to i L^2_r$) and complex-linear, we conclude from \eqref{eq:conjugation2} 
that if $f\in H^2_c$ is an eigenfunction of $\LL_c$ with eigenvalue $\lambda\in\C$ then $\sigma(f)$ is an eigenfunction of
$\LL_c$ with eigenvalue $\overline{\lambda}$. Moreover, one easily checks that for any $f,g\in H^2_c$,
\begin{equation}\label{eq:simplectic_transform}
\omega\big(\LL_c f,g\big)=-\omega\big(f,\LL_c g\big),
\end{equation}
which is consistent with the fact that $X_{\HH^c}$ is a Hamiltonian vector field. Equation \eqref{eq:simplectic_transform}
implies that if $f,g\in H^2_c$ are eigenfunctions of $\LL_c$ with the same eigenvalue $\lambda\in\C\setminus\{0\}$ then 
they are isotropic, i.e. $\omega(f,g)=0$.
\end{Rem}

Since $L^2_c=i L^2_r\oplus_\R L^2_r$ we have that
\begin{equation}\label{eq:the_basis_center}
F_k=\alpha_k+i\beta_k\quad\text{and}\quad F_{-k}=\alpha_{-k}+i\beta_{-k}
\end{equation}
where by \eqref{eq:conjugation2}
\[
\alpha_{\pm k}:=\frac{F_{\pm k}+\sigma(F_{\pm k})}{2}
\quad\text{and}\quad
\beta_{\pm k}:=\frac{F_{\pm k}-\sigma(F_{\pm k})}{2i}
\]
are elements in $i L^2_r$. Moreover, in view of Remark \ref{rem:complex_conjugation},
\begin{equation}\label{eq:eigenvectorG+}
G_k:=\sigma(F_k)=\frac{1}{\varkappa_k}
\begin{pmatrix}
a_k+\sqrt[+]{a_k^2-b^2}\\
-b
\end{pmatrix}
e^{2\pi i k x}
\end{equation}
and
\begin{equation}\label{eq:eigenvectorG-}
G_{-k}:=\sigma(F_{-k})=\frac{1}{\varkappa_k}
\begin{pmatrix}
a_k+\sqrt[+]{a_k^2-b^2}\\
-b
\end{pmatrix}
e^{-2\pi i k x}
\end{equation}
are linearly independent eigenfunctions of the restriction of $\LL_c$ to the invariant space 
$V_k^\C\oplus_\C V_{-k}^\C=W_k^\R\otimes\C$ with eigenvalue
\[
-\lambda_k=-i\sqrt[+]{a_k^2-b^2}=-4\pi i k\sqrt[+]{\pi^2 k^2-|c|^2}.
\]
It follows from \eqref{eq:symplectic_structure}, \eqref{eq:eigenvectorF+}, \eqref{eq:eigenvectorF-}, \eqref{eq:eigenvectorG+}, 
and \eqref{eq:eigenvectorG-} that
\begin{equation}
\omega\big(F_k,G_k\big)=\omega\big(F_{-k},G_{-k}\big)=-2 i\quad\text{and}\quad
\omega\big(F_k,G_{-k}\big)=\omega\big(F_{-k},G_k\big)=0
\end{equation}
while $\omega\big(F_k,F_{-k}\big)=\omega\big(G_k,G_{-k}\big)=0$ in view of Remark \ref{rem:complex_conjugation}. 
Hence,
\[
\omega\big(\alpha_k,\beta_k\big)=\frac{1}{4i}\,\omega\big(F_k+\sigma(F_k),F_k-\sigma(F_k)\big)=1
\]
and similarly $\omega(\alpha_{-k},\beta_{-k})=1$ whereas all other values of $\omega$
evaluated at pairs of vectors from the set $\big\{\alpha_k,\beta_k,\alpha_{-k},\beta_{-k}\big\}$ vanish.
This shows that the vectors $\big\{\alpha_k,\beta_k,\alpha_{-k},\beta_{-k}\big\}$ form a Darboux basis. 
The matrix representation \eqref{eq:L-center} of $\LL_c$ in this basis then follows from \eqref{eq:the_basis_center}
and the fact that $F_k$ and $F_{-k}$  are eigenfunctions of $\LL_c$ with eigenvalue $i \sqrt[+]{a_k^2-b^2}$,
\[
\LL_c\big(\alpha_k+i\beta_k\big)=i \sqrt[+]{a_k^2-b^2}\,\big(\alpha_k+i\beta_k\big)\quad\text{and}\quad
\LL_c\big(\alpha_{-k}+i\beta_{-k}\big)=i \sqrt[+]{a_k^2-b^2}\,\big(\alpha_{-k}+i\beta_{-k}\big).
\]
The asymptotic relations in \eqref{eq:basis_asymptotics} follow from the explicit formulas for $F_{\pm k}$ and $G_{\pm k}$ above
together with $\alpha_m=\big(F_m+G_m\big)/2$, $\beta_m=\big(F_m-G_m\big)/2i$ with $m=\pm k$, and 
$a_k=4\pi^2k^2-2|c|^2$. 

\medskip

The case when $0<\pi |k|<|c|$ is treated in a similar way. In fact, take $k\in\Z$ with $0<\pi k<|c|$ and
denote by $\varkappa_k$ the branch of the square root of
\[
\varkappa_k^2=\sqrt[+]{b^2-a_k^2}\Big(a_k-i \sqrt{b^2-a_k^2}\Big)
\]
that lies in the fourth quadrant of the complex plane $\C$.
It follows from Theorem \ref{th:the_spectrum_of_X_H} $(ii)$ and \eqref{eq:L_k} that 
\begin{equation}\label{eq:eigenvectorF'+}
F_k:=\frac{1}{\varkappa_k}
\begin{pmatrix}
-b\\
a_k-i \sqrt[+]{b^2-a_k^2}
\end{pmatrix}
e^{2\pi i k x}
\end{equation}
and
\begin{equation}\label{eq:eigenvectorF'-}
F_{-k}:=\sigma\big(F_k\big)=-\frac{1}{\overline{\varkappa_k}}
\begin{pmatrix}
a_k+i \sqrt[+]{b^2-a_k^2}\\
-b
\end{pmatrix}
e^{-2\pi i k x}
\end{equation}
are linearly independent eigenfunctions of the restriction of $\LL_c$ to the invariant space 
$V_k^\C\oplus_\C V_{-k}^\C=W_k^\R\otimes\C$ with eigenvalue
\[
\lambda_k=\sqrt[+]{b^2-a_k^2}=4\pi k\sqrt[+]{|c|^2-\pi^2 k^2}.
\]
By arguing in the same way as above, one sees that
\begin{equation}\label{eq:eigenvectorG'+}
G_k:=-\frac{1}{\overline{\varkappa_k}}
\begin{pmatrix}
-b\\
a_k+i \sqrt[+]{b^2-a_k^2}
\end{pmatrix}
e^{2\pi i k x}
\end{equation}
and
\begin{equation}\label{eq:eigenvectorG'-}
G_{-k}:=\sigma\big(G_k\big)=\frac{1}{\varkappa_k}
\begin{pmatrix}
a_k- i \sqrt[+]{b^2-a_k^2}\\
-b
\end{pmatrix}
e^{-2\pi i k x}
\end{equation}
are linearly independent eigenfunctions of the restriction of $\LL_c$ to the invariant space 
$V_k^\C\oplus_\C V_{-k}^\C=W_k^\R\otimes\C$ with eigenvalue
\[
-\lambda_k=-\sqrt[+]{b^2-a_k^2}=-4\pi k\sqrt[+]{|c|^2-\pi^2 k^2}.
\]
Since $L^2_c=i L^2_r\oplus_\R L^2_r$ we have that
\begin{equation}\label{eq:the_basis_focus_focus1}
F_k=\alpha_k+i\alpha_{-k}\quad\text{and}\quad G_k=\beta_k+i\beta_{-k},
\end{equation}
where
\[
\alpha_{\pm k}:=\frac{F_{\pm k}+\sigma(F_{\pm k})}{2}
\quad\text{and}\quad
\beta_{\pm k}:=\frac{F_{\pm k}-\sigma(F_{\pm k})}{2i}
\]
are elements in $i L^2_r$. By \eqref{eq:conjugation2} this implies that 
\begin{equation}\label{eq:the_basis_focus_focus2}
F_{-k}=\alpha_k-i\alpha_{-k}\quad\text{and}\quad G_{-k}=\beta_k-i\beta_{-k}.
\end{equation}
It follows from \eqref{eq:symplectic_structure}, \eqref{eq:eigenvectorF'+}, \eqref{eq:eigenvectorF'-}, \eqref{eq:eigenvectorG'+}, 
and \eqref{eq:eigenvectorG'-} that
\begin{equation}
\omega\big(F_k,G_k\big)=\omega\big(F_{-k},G_{-k}\big)=0\quad\text{and}\quad
\omega\big(F_k,G_{-k}\big)=\omega\big(F_{-k},G_k\big)=2
\end{equation}
while $\omega\big(F_k,F_{-k}\big)=\omega\big(G_k,G_{-k}\big)=0$ in view of Remark \ref{rem:complex_conjugation}.
This together with \eqref{eq:the_basis_focus_focus1} and \eqref{eq:the_basis_focus_focus2} implies that the vectors 
$\big\{\alpha_k,\beta_k,\alpha_{-k},\beta_{-k}\big\}$ form a Darboux basis in $W_k^\R$. 
The matrix representation \eqref{eq:L-focus_focus} of $\LL_c$ in this basis then follows from 
\eqref{eq:the_basis_focus_focus1}, \eqref{eq:the_basis_focus_focus2}, and the fact that $F_k$ and $G_k$ given by
\eqref{eq:eigenvectorF'+} and \eqref{eq:eigenvectorG'+} are eigenfunctions of $\LL_c$ with (real) eigenvalues 
$\pm\sqrt[+]{b^2-a_k^2}$,
\[
\LL_c\big(\alpha_k+i\alpha_{-k}\big)=\sqrt[+]{b^2-a_k^2}\,\big(\alpha_k+i\alpha_{-k}\big)\quad\text{and}\quad
\LL_c\big(\beta_k+i\beta_{-k}\big)=- \sqrt[+]{b^2-a_k^2}\,\big(\beta_k+i\beta_{-k}\big).
\]
This completes the proof of Theorem \ref{th:normal_form}.
\end{proofof}

\begin{Rem}
In fact, the canonical form \eqref{eq:L-drift}, \eqref{eq:L-focus_focus}, and \eqref{eq:L-center}, of the restriction of the
operator $\LL_c$ to the invariant symplectic space $W_k^\R$ with $k\ge 0$ can be deduced from the description of the spectrum
of $\LL_c$ obtained in Theorem \ref{th:the_spectrum_of_X_H} $(i)$, $(ii)$, and the Williamson classification of linear Hamiltonian
systems in $\R^{2n}$ (see \cite{Arn,Wi}). Instead of doing this, we choose to construct the normalizing Darboux basis directly. 
The reason is twofold: first, in this way we obtain explicit formulas for the normalizing basis, and second, we need the asymptotic 
relations \eqref{eq:basis_asymptotics} to conclude that the system of vectors $\big\{\alpha_0,\beta_0\big\}$ together with
$\big\{\alpha_k,\beta_k,\alpha_{-k},\beta_{-k}\big\}_{k\in\Z_{\ge 1}}$ form a Darboux basis in $i L^2_r$ in the sense described 
in Remark \ref{rem:darboux_basis}.
\end{Rem}

In this way, as a consequence of Theorem \ref{th:normal_form} we obtain the following instance of an infinite dimensional 
version of the Williamson classification of linear Hamiltonian systems in $\R^{2n}$(\cite{Wi}).

\begin{Th}\label{th:hessian_normal_form}
Assume that $c\in\R$ and $c\notin \pi\Z$. Then the Hessian $d^2_{\varphi_c}\HH^c$, when viewed as a quadratic form represented
in the Darboux basis $\big\{\alpha_k,\beta_k\big\}_{k\in\Z}$ in $i L^2_r$ given by Theorem \ref{th:normal_form}, takes the form
\begin{eqnarray}\label{eq:hessian_normal_form}
d^2_{\varphi_c}\HH^c&=&4|c|^2dp_0^2-
\sum_{0<\pi k<|c|}4\pi k\sqrt[+]{|c|^2-\pi^2k^2}\big(dp_kdq_k+dp_{-k}dq_{-k}\big)\nonumber\\
&-&\sum_{\pi|k|>|c|}4\pi|k|\sqrt[+]{\pi^2k^2-|c|^2}\big(dp_k^2+dq_k^2\big)
\end{eqnarray}
where $\big\{(dp_k,dq_k)\big\}_{k\in\Z}$ are the dual coordinates in this basis.
\end{Th}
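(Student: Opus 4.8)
The plan is to exploit the elementary fact that the Hessian of a Hamiltonian at a critical point is completely encoded by the linearization of its vector field, and then to read it off block by block from the normal form already established in Theorem \ref{th:normal_form}. Concretely, differentiating the defining relation $\omega(\cdot,X_{\HH^c}(\varphi))=d_\varphi\HH^c(\cdot)$ in an arbitrary direction $v$ at $\varphi=\varphi_c$ (where $X_{\HH^c}(\varphi_c)=0$) and using that $\omega$ is a constant bilinear form, I would first derive
\[
d^2_{\varphi_c}\HH^c(u,v)=\omega(u,\LL_c v),\qquad u,v\in i L^2_r.
\]
The symmetry of the left-hand side is consistent with \eqref{eq:simplectic_transform} together with the antisymmetry of $\omega$. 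Thus the Hessian, regarded as a symmetric bilinear form, is nothing but $\omega(\cdot,\LL_c\cdot)$, and computing its matrix in the Darboux basis of Theorem \ref{th:normal_form} reduces to finite-dimensional linear algebra.

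Second, I would record that this bilinear form is block-diagonal with respect to the decomposition $i L^2_r=\bigoplus_{k\ge 0}W_k^\R$. The subspaces $W_k^\R$ are mutually $\omega$-orthogonal, since they are spanned by disjoint packets $\{\xi_{\pm k}',\eta_{\pm k}'\}$ of the Darboux basis \eqref{eq:basis2}, and each is $\LL_c$-invariant by Theorem \ref{th:the_spectrum_of_X_H}$(iii)$. Hence for $u\in W_i^\R$ and $v\in W_j^\R$ with $i\neq j$ one has $\LL_c v\in W_j^\R$ and therefore $\omega(u,\LL_c v)=0$. Consequently $d^2_{\varphi_c}\HH^c$ splits as a sum of the quadratic forms of its restrictions to the $W_k^\R$, and it suffices to evaluate each restriction in the basis $\{\alpha_k,\beta_k,\alpha_{-k},\beta_{-k}\}$ of Theorem \ref{th:normal_form}.

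Third, I would run the three block computations using \eqref{eq:L-drift}, \eqref{eq:L-focus_focus}, \eqref{eq:L-center} and the Darboux relations $\omega(\alpha_k,\beta_k)=\omega(\alpha_{-k},\beta_{-k})=1$ (all remaining pairings zero). On $W_0^\R$ one has $\LL_c\alpha_0=4|c|^2\beta_0$ and $\LL_c\beta_0=0$, so the only nonzero matrix entry of $\omega(\cdot,\LL_c\cdot)$ is $\omega(\alpha_0,\LL_c\alpha_0)=4|c|^2$, giving the term $4|c|^2\,dp_0^2$. For $0<\pi k<|c|$ the vectors $\alpha_{\pm k}$ and $\beta_{\pm k}$ are eigenvectors of $\LL_c$ with eigenvalues $\pm\mu_k$, $\mu_k:=4\pi k\sqrt[+]{|c|^2-\pi^2k^2}$; the only nonvanishing entries are $\omega(\alpha_{\pm k},\LL_c\beta_{\pm k})=\omega(\beta_{\pm k},\LL_c\alpha_{\pm k})=-\mu_k$, producing the mixed terms in $dp_k\,dq_k$ and $dp_{-k}\,dq_{-k}$. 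For $\pi k>|c|$ the real form \eqref{eq:L-center} gives the purely diagonal entries $\omega(\alpha_{\pm k},\LL_c\alpha_{\pm k})=\omega(\beta_{\pm k},\LL_c\beta_{\pm k})=-\nu_k$ with $\nu_k:=4\pi k\sqrt[+]{\pi^2k^2-|c|^2}$ and no off-diagonal contribution, yielding the terms in $dp_k^2+dq_k^2$ for both $\pm k$.

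Finally, I would assemble the blocks into \eqref{eq:hessian_normal_form}, with two points deserving care. First is the normalization of the mixed monomials: since each off-diagonal pair contributes through both ordered entries of the symmetric matrix, I would state explicitly that $dp_k\,dq_k$ denotes the symmetric product collecting both $\omega(\alpha_k,\LL_c\beta_k)$ and $\omega(\beta_k,\LL_c\alpha_k)$, so that evaluating \eqref{eq:hessian_normal_form} on $v=\sum_k(p_k\alpha_k+q_k\beta_k)$ reproduces $\omega(v,\LL_c v)$ exactly; the diagonal monomials $dp_0^2$, $dp_k^2$, $dq_k^2$ carry their single entry. Second is the convergence of the infinite sum: because $\{\alpha_k,\beta_k\}_{k\in\Z}$ is a basis of $i H^1_r$ in the sense of Remark \ref{rem:darboux_basis} and the coefficients satisfy $\nu_k\sim 4\pi^2 k^2$, the series converges for every $(p_k,q_k)\in\h^1(\Z,\R)\times\h^1(\Z,\R)$, that is, for every $v\in i H^1_r$, which is precisely the natural domain of the bounded quadratic form $d^2_{\varphi_c}\HH^c$; here one invokes the asymptotics \eqref{eq:basis_asymptotics} to identify this coordinate space with $i H^1_r$. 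This matching of the domain is the only genuinely infinite-dimensional point, and I expect it to be the main thing to argue carefully; everything else is the finite-dimensional block bookkeeping above.
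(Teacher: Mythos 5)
Your proposal is correct and follows essentially the same route as the paper: the paper states Theorem \ref{th:hessian_normal_form} as an immediate consequence of Theorem \ref{th:normal_form}, and the implicit derivation is exactly the one you spell out, namely the identity $d^2_{\varphi_c}\HH^c(u,v)=\omega(u,\LL_c v)$ at the critical point, the $\omega$-orthogonal, $\LL_c$-invariant splitting $i L^2_r=\bigoplus_{k\ge 0}W_k^\R$, and the block-by-block evaluation in the Darboux basis. Your explicit handling of the normalization of the mixed monomials $dp_k\,dq_k$ (so that the stated coefficients reproduce $\omega(v,\LL_c v)$ exactly) and of the $i H^1_r$ form domain via \eqref{eq:basis_asymptotics} supplies details the paper leaves tacit, and both are handled correctly.
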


For any $0<\pi k<|c|$ denote
\begin{equation}\label{eq:focus-focus}
I_k:=p_kq_k+p_{-k}q_{-k}\quad\text{and}\quad I_{-k}:=p_kq_{-k}-p_{-k}q_k,
\end{equation}
and for $\pi|k|>|c|$,
\[
I_k:=\big(p_k^2+q_k^2\big)/2
\]
whereas for $k=0$
\[
I_0:=p_0^2/2\,.
\]
Note that the functions in \eqref{eq:focus-focus} are the commuting integrals characterizing the {\em focus-focus} singularity 
in the symplectic space $\R^4$  --  see e.g. \cite{Zung}. We conjecture that the following holds: 
{\em There exists an open neighborhood $U$ of $\varphi_c$ in $i L^2_r$, an open neighborhood $V$ of zero in 
$\ell^2(\Z,\R)\times\ell^2(\Z,\R)$, and a {\em canonical} real analytic diffeomorphism $\Phi : U\to V$ such that for any $s\ge 0$,
\[
\Phi : U\cap i H^s_r\to V\cap\big(\h^s(\Z,\R)\times\h^s(\Z,\R)\big),\quad\varphi\mapsto\big\{(p_k,q_k)\big\}_{k\in\Z},
\]
and for any $(p,q)\in V\cap\big(\h^1(\Z,\R)\times\h^1(\Z,\R)\big)$,
\[
\HH^c\circ\Phi^{-1}(p,q)=H^c\big(\{I_k\}_{k\in\Z}\big),
\]
where $H^c : \ell^1_2(\Z,\R)\to\R$ is a real analytic map.
We will discuss this conjecture in future work.}

\section{Non-existence of local Birkhoff coordinates}\label{sec:birkhoff_coordinates}
First, we will discuss the notion of {\em local Birkhoff coordinates}.
Let $\h^s\equiv\h^s(\Z,\R)$ and $\ell^2\equiv\ell^2(\Z,\R)$.

\begin{Def}\label{def:local_birkhoff_coordinates}
We say that the focusing NLS equation has {\em local Birkhoff coordinates} in a neighborhood of $\varphi^\bullet\in i H^2_r$
if there exist an open connected neighborhood $U$ of $\varphi^\bullet$ in $i L^2_r$, an open neighborhood $V$ of 
$(p^\bullet,q^\bullet)\in\h^2\times\h^2$ in $\ell^2\times\ell^2$, 
and a canonical $C^2$-diffeomorphism $\Phi : U\to V$ such that for any $0\le s\le 2$,
\[
\Phi : U\cap i H^s_r\to V\cap\big(\h^s\times\h^s\big),\quad\varphi\mapsto\big\{(p_k,q_k)\big\}_{k\in\Z},
\]
is a $C^2$-diffeomorphism and for any $k\in\Z$ the Poisson bracket $\{I_k,H\}$, where $H:=\HH\circ\Phi^{-1}$ and 
$I_k:=\big(p_k^2+q_k^2\big)/2$, vanishes on $V\cap\big(\h^1\times\h^1\big)$.
\end{Def}
The map $\Phi : U\to V$ being {\em canonical} means that 
\begin{equation}\label{eq:canonical}
(\Phi^{-1})^*\omega=\sum_{k\in\Z}dp_k\wedge dq_k
\end{equation}
where $\Phi^{-1} : V\to U$ is the inverse of $\Phi : U\to V$ and $\omega$ is symplectic form \eqref{eq:symplectic_structure} 
on $i L^2_r$. Assume that the focusing NLS equation has local Birkhoff coordinates in a neighborhood of $\varphi^\bullet\in i H^2_r$.
Then, for any $k\in\Z$ and $0\le s\le 2$ consider the action variable
\[
{\mathcal I}_k :  U\cap i H^s_r\to\R,\quad{\mathcal I}_k:=I_k\circ\Phi.
\]
Recall that $\big\{S^t\big\}_{t\in\R}$ denotes the Hamiltonian flow,
\[
S^t : i H^s_r\to i H^s_r,\quad(\varphi_1,\varphi_2)\mapsto\big(\varphi_1 e^{it},\varphi_2 e^{-it}\big),
\]
generated by the Hamiltonian $\HH_1(\varphi)=-\int_0^1\varphi_1(x)\varphi_2(x)\,dx$ (see \eqref{eq:H_1})
from the standard NLS hierarchy (see e.g. \cite{GK1}).

\begin{Def}\label{def:gauge_invariant_coordinates}
The local Birkhoff coordinates are called {\em gauge invariant} if for any $k\in\Z$, $0\le s\le 2$, and for any 
$\varphi\in U\cap i H^s_r$ and $t\in\R$ such that $S^t(\varphi)\in U\cap i H^s_r$ one has 
${\mathcal I}_k\big(S^t(\varphi)\big)={\mathcal I}_k(\varphi)$. 
\end{Def}

\begin{Rem}
The gauge invariance of local Birkhoff coordinates means that the Hamiltonian $\mathcal{H}_1$ belongs to the Poisson algebra 
$\mathcal{A}_{\mathcal{I}}:=\big\{F\in C^1(U,\C)\,\big|\,\{F,\mathcal{I}_k\}=0\,\,\forall k\in\Z\big\}$
generated by the local action variables $\big\{\mathcal{I}_k\big\}_{k\in\Z}$ . Note that, for example, $\HH_1$ belongs
to the Poisson algebra generated by the functionals $\big\{\Delta_\lambda\big\}_{\lambda\in\C}$ where 
$\Delta_\lambda : i L^2_r\to\C$ is the discriminant 
$\Delta_\lambda(\varphi)\equiv\Delta(\lambda,\varphi):=\tr M(x,\lambda,\varphi)|_{x=1}$ and $M(x,\lambda,\varphi)$ is 
the fundamental $2\times 2$-matrix solution of the Zakharov-Shabat system (see e.g. \cite{GK1}).
\end{Rem}

The main result of this Section is Theorem \ref{th:main_introduction} stated in the Introduction which we recall for the
convenience of the reader.

\begin{Th}\label{th:main}
For any given $c\in\C$ with $|c|\notin\pi\Z$ and $|c|>\pi$ the focusing NLS equation does {\em not} admit gauge invariant 
local Birkhoff coordinates in a neighborhood of the constant potential $\varphi_c\in i C^\infty_r$.
\end{Th}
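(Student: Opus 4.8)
The plan is to argue by contradiction, exploiting the fact that gauge invariant local Birkhoff coordinates would force the linearization $\LL_c=d_{\varphi_c}X_{\HH^c}$ to have purely imaginary spectrum, whereas Theorem \ref{th:the_spectrum_of_X_H} produces genuinely real eigenvalues as soon as $|c|>\pi$. First I would reduce to $c\in\R$: by Lemma \ref{lem:c-real} the gauge flow $S^t$ with $t=-\arg(c)$ conjugates $\LL_c$ to $\LL_{|c|}$, and since each $S^t$ is a symplectomorphism preserving both $\HH$ and $\HH_1$, it carries gauge invariant Birkhoff coordinates near $\varphi_c$ to such coordinates near $\varphi_{|c|}$; hence it suffices to treat $c=|c|>\pi$. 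Assume then that a canonical $C^2$ diffeomorphism $\Phi$ as in Definitions \ref{def:local_birkhoff_coordinates} and \ref{def:gauge_invariant_coordinates} exists near $\varphi^\bullet=\varphi_c$. The Birkhoff property gives $\{I_k,H\}=0$ with $H=\HH\circ\Phi^{-1}$, while gauge invariance says exactly that each $\mathcal I_k=I_k\circ\Phi$ is constant along the flow $S^t$ of $\HH_1$, i.e. $\{\mathcal I_k,\HH_1\}=0$, equivalently $\{I_k,\HH_1\circ\Phi^{-1}\}=0$. Subtracting, the pushed-forward re-normalized Hamiltonian $H^c:=\HH^c\circ\Phi^{-1}$ satisfies
\[
\{I_k,H^c\}=0,\qquad k\in\Z.
\]

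Next I would convert these commutation relations into a spectral statement. By Lemma \ref{lem:normalized_hamiltonian} the point $m^\bullet:=\Phi(\varphi_c)$ is a critical point of $H^c$, and $A:=d_{m^\bullet}X_{H^c}$ is the linear, infinitesimally symplectic Hamiltonian vector field attached to the Hessian of $H^c$ at $m^\bullet$. Differentiating $\{I_k,H^c\}=0$ at $m^\bullet$ (where $X_{H^c}=0$) gives $d_{m^\bullet}I_k\circ A=0$, so $\Image A\subseteq\bigcap_k\Ker d_{m^\bullet}I_k=D^{\perp_\omega}$, where $D:=\Span\{X_{I_k}(m^\bullet)\}$; because $A$ is infinitesimally symplectic this is equivalent to $AD=\{0\}$. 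Now suppose $Av=\mu v$ with $\mu\in\R\setminus\{0\}$. For each mode $k$ with $I_k(m^\bullet)=0$ the quadratic integral $I_k$, being positive definite on the $k$-th plane, is preserved by $e^{tA}$, which forces $\mu|\pi_k v|^2=0$ and hence $\pi_k v=0$; thus $v$ is supported on the modes $k\in S:=\{k:I_k(m^\bullet)\ne0\}$. On these the inclusion $v\in\Image A\subseteq\bigcap_k\Ker d_{m^\bullet}I_k$ annihilates the radial (action) directions, leaving $v$ in the angle directions spanned by $D$; but then $Av=0$, contradicting $\mu v=Av\ne0$. Hence $A$ has purely imaginary spectrum. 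For the constant potential one expects $S$ to reduce to the single gauge direction, matching the elliptic-plus-nilpotent block picture of Theorem \ref{th:normal_form}.

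Finally I would derive the contradiction from the conjugacy of linearizations. Since $\Phi$ is a canonical $C^2$ diffeomorphism restricting to diffeomorphisms $i H^s_r\to\h^s\times\h^s$ for $0\le s\le2$, its differential $d_{\varphi_c}\Phi$ is a bounded isomorphism intertwining $\LL_c=d_{\varphi_c}X_{\HH^c}$ and $A=d_{m^\bullet}X_{H^c}$; in particular $\LL_c$ and $A$ share the same eigenvalues. But $|c|>\pi$ guarantees that $k=\pm1$ satisfies $0<\pi|k|<|c|$, and Theorem \ref{th:the_spectrum_of_X_H} then exhibits the real eigenvalues $\pm4\pi\sqrt[+]{|c|^2-\pi^2}\ne0$ of $\LL_c$. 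This contradicts the fact that $A$ has purely imaginary spectrum, and shows that no gauge invariant local Birkhoff coordinates can exist near $\varphi_c$. The hypotheses enter sharply: $|c|>\pi$ is precisely what makes a hyperbolic mode appear, while $|c|\notin\pi\Z$ keeps all blocks non-degenerate so that Theorem \ref{th:the_spectrum_of_X_H} applies.

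The main obstacle is carrying out the second step rigorously in the infinite-dimensional, merely $C^2$ setting. One must justify that differentiating the infinitely many conservation laws $\{I_k,H^c\}=0$, and invoking preservation of the quadratic integrals $I_k$ by $e^{tA}$, are legitimate operations with the required summability and uniformity across $k\in\Z$, rather than formal mode-by-mode manipulations; and that the resulting constraint forces the \emph{spectrum} (not merely formal eigenvalues) of the \emph{unbounded} operator $\LL_c:iH^2_r\to iL^2_r$ onto the imaginary axis. In parallel one has to settle the regularity bookkeeping: that $H^c$ is $C^2$ with a Hamiltonian vector field of class $C^1$ on the relevant Sobolev scale so that $A$ is well defined, and that the intertwining $d_{\varphi_c}\Phi\circ\LL_c=A\circ d_{\varphi_c}\Phi$ holds as an identity of closed operators, so that a real eigenvalue of $\LL_c$ genuinely transports to one of $A$.
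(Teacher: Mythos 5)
Your proposal follows the paper's top-level strategy: granting gauge invariant local Birkhoff coordinates, the differential of the canonical map $\Phi$ at the critical point conjugates $\LL_c$ to the linearization $A:=\widetilde{\LL}_c$ of the transformed vector field (diagram \eqref{eq:diagram_operators}; as you note, this conjugation of linearizations is legitimate at a critical point even though $\Phi$ is only $C^2$), and the nonzero real eigenvalues $\pm4\pi\sqrt[+]{|c|^2-\pi^2}$ of $\LL_c$ supplied by Lemma \ref{lem:c-real} and Theorem \ref{th:the_spectrum_of_X_H} when $|c|>\pi$ then contradict a spectral constraint on $A$. Where you genuinely diverge is in how that constraint is obtained. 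The paper's Lemma \ref{lem:the_spectrum_of_X_H_tilde} derives a complete normal form \eqref{eq:L-tilde_final} for $\widetilde{\LL}_c$ and shows its entire spectrum lies on the imaginary axis; the ingredients are the relations \eqref{eq:relation1}, Lemma \ref{lem:center} (used to decouple the zero-action modes, \eqref{eq:relation2}, without ever taking third derivatives of the merely $C^2$ function $H^c$), the commutator identity \eqref{eq:commutator_relation} obtained by conjugating with the \emph{bounded} rotations $S_k^t$, and, crucially, the geometric simplicity of the zero eigenvalue from Theorem \ref{th:the_spectrum_of_X_H}(i), which forces the set of modes with nonvanishing action to be a singleton. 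You prove strictly less --- only that no nonzero real eigenvalue can occur --- and correspondingly need neither Lemma \ref{lem:center}, nor the multiplicity-one input, nor any normal form: positivity of $dp_k^2+dq_k^2$ on the zero-action modes kills those components of a putative hyperbolic eigenvector, the inclusion $\Image A\subseteq\bigcap_k\Ker d_{m^\bullet}I_k$ pins the remaining components to the angular directions, and $A$ vanishes on the closed span of the $X_{I_k}(m^\bullet)$ (for the passage to the closure you should invoke closedness of $A$, which follows from its conjugacy to $\LL_c$, an operator with compact resolvent), contradicting $Av=\mu v\ne 0$. What the paper's route buys is that every step uses only first and second derivatives of $H^c$ and $t$-derivatives of conjugations by bounded linear maps; what your route buys is economy, since it bypasses the classification of $\widetilde\LL_c$ altogether.

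The one genuine gap --- which you flag yourself --- is the assertion that each zero-action integral $I_k$ is ``preserved by $e^{tA}$''. Since $H^c$ is only $C^2$, you cannot obtain the infinitesimal version $Q_k(Av,w)+Q_k(v,Aw)=0$ by differentiating $\{I_k,H^c\}=0$ twice, and since $A$ is unbounded, $e^{tA}$ is not available for free. Both objections can be met along the lines you sketch: the nonlinear flow of $X_{H^c}$ exists near $m^\bullet$ (it is $\Phi$ composed with the flow of $X_{\HH^c}$ on $iH^2_r$ composed with $\Phi^{-1}$), it preserves each $I_k$ exactly because $\{I_k,H^c\}=0$, and since $I_k$ is exactly quadratic with $d_{m^\bullet}I_k=0$ at a zero-action mode, the second-order Taylor expansion of $I_k\circ\phi_t=I_k$ at the fixed point yields $Q_k(D_th)=Q_k(h)$ for the derivative $D_t=d\Phi\circ e^{t\LL_c}\circ d\Phi^{-1}$ of the flow, while eigenvectors of $\LL_c$ satisfy $e^{t\LL_c}u=e^{\mu t}u$ by uniqueness for the linear equation $\partial_t w=\LL_c w$. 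Alternatively, and more cheaply, observe that the infinitesimal invariance you need is precisely the conjunction of the paper's \eqref{eq:relation2} and \eqref{eq:commutator_relation}: vanishing of the couplings of a zero-action mode to all other modes, plus antisymmetry of its diagonal $2\times 2$ block. Substituting that derivation (Lemma \ref{lem:center} plus conjugation by $S_k^t$) for your flow argument removes all semigroup considerations and makes your shortened proof fully rigorous within the paper's regularity framework.
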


Consider the commutative diagram
\begin{equation}\label{eq:diagram_vector_field}
\begin{tikzcd}
U\cap i H^2_r\arrow{r}{X_{\HH^c}}\arrow[swap]{d}{\Phi}& i L^2_r\arrow{d}{\Phi_*}\\
V\cap\big(\h^2\times\h^2\big)\arrow{r}{\widetilde{X}_{\HH^c}}&\ell^2\times\ell^2
\end{tikzcd}
\end{equation}
where $X_{\HH^c}$ is the Hamiltonian vector field of the re-normalized Hamiltonian 
$\HH^c$, $\Phi_*\equiv (d\Phi)|_{\varphi=\varphi_c}$, and $\widetilde{X}_{\HH^c}$ is defined by the diagram. 
By linearizing the maps in this diagram at $\varphi_c$ we obtain
\begin{equation}\label{eq:diagram_operators}
\begin{tikzcd}
i H^2_r\arrow{r}{\LL_c}\arrow[swap]{d}{\Phi_*}& i L^2_r\arrow{d}{\Phi_*}\\
\h^2\times\h^2\arrow{r}{\widetilde{\LL}_c}&\ell^2\times\ell^2
\end{tikzcd}
\end{equation}
where $\LL_c$ is the linearization of $X_{\HH^c}$ at the critical point $\varphi_c$ and $\widetilde{\LL}_c$ is the linearization
of $\widetilde{X}_{\HH^c}$ at the critical point $(p^\bullet,q^\bullet)=\Phi(\varphi_c)$. In particular, we see 
that the (unbounded) linear operator $\LL_c$ on $i L^2_r$ with domain $i H^2_r$ is conjugated to the operator
$\widetilde{\LL}_c$ on $\ell^2\times\ell^2$ with domain $\h^2\times\h^2$. We have

\begin{Lem}\label{lem:the_spectrum_of_X_H_tilde}
Assume that for a given $c\in\C$ the focusing NLS equation has gauge invariant local Birkhoff coordinates in a neighborhood
of the constant potential $\varphi_c$. Then the spectrum of the operator $\widetilde{\LL}_c$ is discrete and lies on the 
imaginary axis.
\end{Lem}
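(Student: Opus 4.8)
The plan is to use the conserved actions furnished by the hypothesis to determine the fine structure of $\widetilde{\LL}_c$ and then read off its spectrum. Discreteness is immediate: by \eqref{eq:diagram_operators} we have $\widetilde{\LL}_c=\Phi_*\,\LL_c\,\Phi_*^{-1}$, where $\Phi_*=(d\Phi)|_{\varphi_c}$ is a bounded isomorphism $iL^2_r\to\ell^2\times\ell^2$ restricting to a bounded isomorphism $iH^2_r\to\h^2\times\h^2$. Since $\LL_c$ has compact resolvent (Theorem \ref{th:the_spectrum_of_X_H}) and compactness of the resolvent is preserved under conjugation by the bounded isomorphism $\Phi_*$, the operator $\widetilde{\LL}_c$ has compact resolvent as well, hence discrete spectrum. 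The substantive point is the location of the spectrum.

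Next I would record that the hypotheses make the re-normalized Hamiltonian a function of the actions in the Birkhoff chart. By Definition \ref{def:local_birkhoff_coordinates} one has $\{I_k,H\}=0$ with $H=\HH\circ\Phi^{-1}$, while gauge invariance (Definition \ref{def:gauge_invariant_coordinates}) gives $\{\mathcal{I}_k,\HH_1\}=0$, i.e. $\{I_k,\HH_1\circ\Phi^{-1}\}=0$; since $\HH^c=\HH-2|c|^2\HH_1$ and $\Phi$ is canonical, it follows that $\{I_k,H^c\}=0$ for all $k$, where $H^c:=\HH^c\circ\Phi^{-1}$. Because each $I_k=(p_k^2+q_k^2)/2$ generates the $2\pi$-periodic rotation of the $k$-th symplectic plane, invariance of $H^c$ under all of these rotations yields, locally near $m_0:=\Phi(\varphi_c)$, a representation $H^c=G(\{I_k\})$ with $G$ smooth. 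Writing $z_k:=p_k+iq_k$, Hamilton's equations read $\dot z_k=i\,\omega_k(\{I_j\})\,z_k$ with $\omega_k:=\partial G/\partial I_k$, so that $m_0$ being an equilibrium forces $\omega_k(m_0)=0$ for every $k$ with $z_k^\bullet\neq0$.

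Then I would linearize and split the modes. Setting $z_k=z_k^\bullet+\zeta_k$ and $\delta I_j=\mathrm{Re}(\overline{z_j^\bullet}\,\zeta_j)$, the linearization of $\dot z_k=i\,\omega_k z_k$ at $z^\bullet$ is
\[
\dot\zeta_k=i\,\omega_k(m_0)\,\zeta_k+i\,z_k^\bullet\sum_j G_{kj}(m_0)\,\mathrm{Re}\big(\overline{z_j^\bullet}\,\zeta_j\big),\qquad G_{kj}:=\frac{\partial^2 G}{\partial I_k\,\partial I_j}.
\]
For the modes with $z_k^\bullet=0$ this reduces to $\dot\zeta_k=i\,\omega_k(m_0)\,\zeta_k$, an elliptic block with purely imaginary eigenvalues $\pm i\,\omega_k(m_0)$. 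For the modes with $z_k^\bullet\neq0$ one has $\omega_k(m_0)=0$, and writing $\zeta_k=\alpha_k z_k^\bullet+\beta_k\,i z_k^\bullet$ with $\alpha_k,\beta_k\in\R$ the system collapses to $\dot\alpha_k=0$ and $\dot\beta_k=\sum_j G_{kj}(m_0)\,|z_j^\bullet|^2\,\alpha_j$, a strictly lower-triangular, hence nilpotent, block with the single eigenvalue $0$. Thus the spectrum of $\widetilde{\LL}_c$ consists of $0$ together with the numbers $\pm i\,\omega_k(m_0)$, and therefore lies on the imaginary axis.

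The hard part will be to make the representation $H^c=G(\{I_k\})$ and the mode splitting rigorous in the infinite-dimensional setting: justifying the dependence on the actions for a smooth functional invariant under all the plane rotations, controlling the convergence of the sums and the domains of the unbounded operators involved, and, most importantly, coping with the fact that $m_0$ is not the center of the action coordinates, so that the individual quadratic forms $I_k$ are not preserved by the linearized flow. The conceptual heart is precisely that a mode carrying $z_k^\bullet\neq0$ can only contribute a nilpotent drift (the Jordan block at $0$ in \eqref{eq:L-drift}) and never an elliptic pair; this imaginary-axis conclusion is what will clash, via Theorem \ref{th:the_spectrum_of_X_H}, with the real eigenvalues of $\LL_c$ present once $|c|>\pi$, yielding the contradiction behind Theorem \ref{th:main}.
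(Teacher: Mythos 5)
Your outline coincides with the paper's proof at both ends: discreteness via the conjugation \eqref{eq:diagram_operators} together with Theorem \ref{th:the_spectrum_of_X_H}; the derivation of $\{I_k,H^c\}=0$ from Definitions \ref{def:local_birkhoff_coordinates} and \ref{def:gauge_invariant_coordinates}; and your final block picture (a nilpotent block built on the modes with $(p_k^\bullet,q_k^\bullet)\neq(0,0)$, rotation blocks on the remaining modes) is exactly the paper's formula \eqref{eq:L-tilde_final}. The genuine gap is the step you yourself defer to ``the hard part'': the local representation $H^c=G\big(\{I_k\}_{k\in\Z}\big)$ near $z^\bullet=\Phi(\varphi_c)$ (your $m_0$), with $G$ regular enough to possess the derivatives $\omega_k(m_0)$ and $G_{kj}(m_0)$ on which your whole linearization computation rests. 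Nothing in the hypotheses provides such a $G$. There is no infinite-dimensional Schwarz--Whitney type theorem that factors a torus-invariant functional through the action map with controlled regularity; indeed the paper only \emph{conjectures} a related representation at the end of Section \ref{sec:linearization}, so the lemma cannot be made to rest on one. Worse, $\Phi$ is only assumed to be a $C^2$-diffeomorphism, and finite regularity is lost precisely at the modes in $B=\big\{k\,:\,(p_k^\bullet,q_k^\bullet)=(0,0)\big\}$: already in one degree of freedom the rotation-invariant function $f=(p^2+q^2)^{3/2}$ is $C^2$, while $G(I)=(2I)^{3/2}$ fails to be $C^2$ at $I=0$. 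So the quantities $G_{kj}(m_0)$ you use may simply not exist, and the identity $\dot z_k=i\,\omega_k(\{I_j\})\,z_k$ that you linearize is not available in the regularity class at hand.

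The paper obtains the identical block structure without ever constructing $G$, and this is the content your proposal is missing. It works directly with the identities $(dH^c)(X_{I_l})=0$, which make sense for a $C^2$ Hamiltonian: differentiating them at the critical point $z^\bullet$ gives the Hessian relations \eqref{eq:relation1}; differentiating them on the whole neighborhood and applying Lemma \ref{lem:center} to $F=\partial H^c/\partial p_n$ and $F=\partial H^c/\partial q_n$ removes every mixed Hessian entry coupling a mode $l\in B$ to any other mode, see \eqref{eq:relation2}; and for the diagonal $B$-blocks it uses that the rotation flows $S_k^t$, $k\in B$, fix $z^\bullet$ and commute with $X_{H^c}$, whence the commutator identity \eqref{eq:commutator_relation}, which forces each such block to be a scalar multiple $B_n$ of the standard rotation generator. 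This yields \eqref{eq:L-tilde_final} entirely within the $C^2$ category, and the compact resolvent (plus the geometric simplicity of the kernel from Theorem \ref{th:the_spectrum_of_X_H}) then places the spectrum in $\{0\}\cup\{\pm i B_n\}\subset i\R$. If you want to keep your route, you would have to prove the invariant-function representation with the required smoothness and justify convergence of the resulting operator sums in the sequence spaces --- a problem substantially harder than the lemma itself.
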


\begin{proof}[Proof of Lemma \ref{lem:the_spectrum_of_X_H_tilde}]
Let $\big\{(p_k,q_k)\big\}_{k\in\Z}$ be the local Birkhoff coordinates on $V\cap\big(\ell^2\times\ell^2\big)$ and
let $H^c :=\HH^c\circ\Phi^{-1} : V\cap\big(\h^1\times\h^1\big)\to\R$ be the Hamiltonian $\HH^c$ in these coordinates.
One easily concludes from \eqref{eq:canonical} and \eqref{eq:diagram_vector_field} that in the open neighborhood 
$V\cap\big(\h^2\times\h^2\big)$ of the critical point $z^\bullet:=(p^\bullet,q^\bullet)$ one has
\[
\widetilde{X}_{\HH^c}=X_{H^c}=\sum_{n\in\Z}\Big(\frac{\partial H^c}{\partial p_n}\,\partial_{q_n}-
\frac{\partial H^c}{\partial q_n}\,\partial_{p_n}\Big).
\]
Since by Lemma \ref{lem:normalized_hamiltonian}  and \eqref{eq:diagram_vector_field}, 
$z^\bullet$ is a critical point of $\widetilde{X}_{\HH^c}$, 
\begin{equation}\label{eq:critical_point}
\frac{\partial H^c}{\partial p_n}\Big|_{z\bullet}=\frac{\partial H^c}{\partial q_n}\Big|_{z^\bullet}=0
\end{equation}
for any $n\in\Z$. In addition, we obtain that the operator $\widetilde{\LL}_c : \h^2\times\h^2\to\ell^2\times\ell^2$
takes the form
\begin{eqnarray}\label{eq:L-tilde}
\widetilde{\LL}_c&\equiv& d_{z^\bullet}\widetilde{X}_{\HH^c}\nonumber\\
&=&\sum\limits_{n\in\Z}
\partial_{q_n}\otimes\sum\limits_{l\in\Z}\Big(\frac{\partial^2 H^c}{\partial p_n\partial p_l}\,dp_l+
\frac{\partial^2 H^c}{\partial p_n\partial q_l}\,dq_l\Big)\Big|_{z^\bullet}\\
&-&\sum\limits_{n\in\Z}
\partial_{p_n}\otimes\sum\limits_{l\in\Z}\Big(\frac{\partial^2 H^c}{\partial q_n\partial p_l}\,dp_l+
\frac{\partial^2 H^c}{\partial q_n\partial q_l}\,dq_l\Big)\Big|_{z^\bullet}.\nonumber
\end{eqnarray}
Note that for any $l\in\Z$,
\[
X_{I_l}=p_l\partial_{q_l}-q_l\partial_{p_l}.
\]
Since the local Birkhoff coordinates are assumed gauge invariant and since $dH(X_{I_k})=\{H,I_k\}=0$ 
for any $k\in\Z$, we obtain that for any $l\in\Z$,
\begin{equation}\label{eq:integrals}
0=(d H^c)(X_{I_l})=p_l\frac{\partial H^c}{\partial q_l}-q_l\frac{\partial H^c}{\partial p_l}
\end{equation}
in the open neighborhood $V\cap\big(\h^2\times\h^2\big)$ of $z^\bullet$.
By taking the partial derivatives $\partial_{p_n}$ and $\partial_{q_n}$ of the equality above 
at $z^\bullet$ for $n\in\Z$ we obtain, in view of \eqref{eq:critical_point}, that for any $n,l\in\Z$,
\begin{equation}\label{eq:relation1}
\Big(\frac{\partial^2 H^c}{\partial p_n\partial q_l}\,p_l-
\frac{\partial^2 H^c}{\partial p_n\partial p_l}\,q_l\Big)\Big|_{z^\bullet}=0
\quad\text{and}\quad
\Big(\frac{\partial^2 H^c}{\partial q_n\partial q_l}\,p_l-
\frac{\partial^2 H^c}{\partial q_n\partial p_l}\,q_l\Big)\Big|_{z^\bullet}=0.
\end{equation}
We split the set of indices $\Z$ in the sum above into two subsets
\[
A:=\big\{l\in\Z\,\big|\,(p^\bullet_l,q^\bullet_l)\ne(0,0)\big\}\quad\text{and}\quad
B:=\big\{l\in\Z\,\big|\,(p^\bullet_l,q^\bullet_l)=(0,0)\big\}.
\]
Note that for $l\in B$ the relations \eqref{eq:relation1} are trivial.
More generally, by taking the partial derivatives $\partial_{p_n}$ and $\partial_{q_n}$ of \eqref{eq:integrals} in 
$V\cap\big(\h^2\times\h^2\big)$ for $n\ne l$ we see that for any $l\in\Z$ and for any $n\ne l$ we have
\begin{equation}
\frac{\partial^2 H^c}{\partial p_n\partial q_l}\,p_l-
\frac{\partial^2 H^c}{\partial p_n\partial p_l}\,q_l=0
\quad\text{and}\quad
\frac{\partial^2 H^c}{\partial q_n\partial q_l}\,p_l-
\frac{\partial^2 H^c}{\partial q_n\partial p_l}\,q_l=0
\end{equation}
for any $(p,q)\in V\cap\big(\h^2\times\h^2\big)$.
This and Lemma \ref{lem:center} below, applied to $I$ equal to $\big(p_l^2+q_l^2\big)/2$ and
$F$ equal to $\frac{\partial H^c}{\partial p_n}$ and $\frac{\partial H^c}{\partial q_n}$ respectively, implies that for 
any $l\in B$ and for any $n\ne l$,
\begin{equation}\label{eq:relation2}
\frac{\partial^2 H^c}{\partial p_n\partial q_l}\Big|_{z^\bullet}=
\frac{\partial^2 H^c}{\partial p_n\partial p_l}\Big|_{z^\bullet}=0
\quad\text{and}\quad
\frac{\partial^2 H^c}{\partial q_n\partial q_l}\Big|_{z^\bullet}=
\frac{\partial^2 H^c}{\partial q_n\partial p_l}\Big|_{z^\bullet}=0.
\end{equation}
By combining \eqref{eq:relation2} with \eqref{eq:L-tilde} we obtain
\begin{eqnarray}\label{eq:L-tilde'}
\widetilde{\LL}_c&=&\sum\limits_{n\in A}
\partial_{q_n}\otimes\sum\limits_{l\in A}\Big(\frac{\partial^2 H^c}{\partial p_n\partial p_l}\,dp_l+
\frac{\partial^2 H^c}{\partial p_n\partial q_l}\,dq_l\Big)\Big|_{z^\bullet}\nonumber\\
&-&\sum\limits_{n\in A}
\partial_{p_n}\otimes\sum\limits_{l\in A}\Big(\frac{\partial^2 H^c}{\partial q_n\partial p_l}\,dp_l+
\frac{\partial^2 H^c}{\partial q_n\partial q_l}\,dq_l\Big)\Big|_{z^\bullet}\nonumber\\
&+&\sum\limits_{n\in B}
\big(\partial_{p_n},\partial_{q_n}\big)\otimes
\begin{pmatrix}
-\frac{\partial^2 H^c}{\partial q_n\partial p_n}&-\frac{\partial^2 H^c}{\partial q_n\partial q_n}\\
\frac{\partial^2 H^c}{\partial p_n\partial p_n}&\frac{\partial^2 H^c}{\partial p_n\partial q_n}
\end{pmatrix}
_{\big|_{z^\bullet}}
\begin{pmatrix}
dp_n\\
dq_n
\end{pmatrix}
.
\end{eqnarray}
Since the local Birkhoff coordinates are assumed gauge invariant and since $dH(X_{I_k})=\{H,I_k\}=0$ 
for any $k\in\Z$, we conclude that the flow $S_k^t$ of the vector field $X_{I_k}$ preserves $X_{H^c}$, that is
for any $t\in\R$ and for any $(p,q)\in V\cap\big(\h^2\times\h^2\big)$ such that 
$S_k^t(p,q)\in V\cap\big(\h^2\times\h^2\big)$ we have the following commutative diagram
\begin{equation}\label{eq:diagram_vector_field_k}
\begin{tikzcd}
V\cap\big(\h^2\times\h^2\big)\arrow{r}{X_{H^c}}\arrow[swap]{d}{S_k^t}&\ell^2\times\ell^2\arrow{d}{S_k^t}\\
V\cap\big(\h^2\times\h^2\big)\arrow{r}{X_{H^c}}&\ell^2\times\ell^2
\end{tikzcd}
.
\end{equation}
\begin{Rem}
Note that for any $s\in\R$ and for any $k\in\Z$ we have that $X_{I_k}=p_k\partial_{q_k}-q_k\partial_{p_k}$ is
a vector field in the proper sense (i.e. non-weak) on $\h^s\times\h^s$ and that 
$S_k^t : \h^s\times\h^s\to \h^s\times\h^s$ is a bounded linear map. In fact, if we introduce complex variables 
$z_k:=p_k+iq_k$, $k\in\Z$, then 
\[
\big(S_k^t(z)\big)_l=\left\{
\begin{array}{lr}
z_l,&l\ne k,\\
e^{-it} z_k,&l=k.
\end{array}
\right.
\]
\end{Rem}
\noindent In particular, we see from \eqref{eq:diagram_vector_field_k} that for any $k\in\Z$ and for any $t\in\R$ near zero 
we have that $\big(d_{S_k^t(z^\bullet)}X_{H^c}\big)\circ S_k^t=S_k^t\circ(d_{z^\bullet} X_{H^c})$.
For $k\in B$ we have $S_k^t(z^\bullet)=z^\bullet$ and hence, for any $t\in\R$ near zero,
\[
\widetilde{\LL}_c\circ S_k^t= S_k^t\circ\widetilde{\LL}_c.
\]
By taking the $t$-derivative at $t=0$ we obtain that for any $k\in B$,
\begin{equation}\label{eq:commutator_relation}
\big[\widetilde{\LL}_c,d_{z^\bullet}X_{I_k}\big]=0\quad\text{where}\quad
d_{z^\bullet}X_{I_k}=\partial_{q_k}\otimes dp_k-\partial_{p_k}\otimes dq_k.
\end{equation}
Formula \eqref{eq:L-tilde'} together with \eqref{eq:commutator_relation} and \eqref{eq:relation1} then implies that
\begin{equation}\label{eq:L-tilde_final}
\widetilde{\LL}_c=
\sum_{n,k\in A} A_{nk}\,\,\big(X_{I_n}\big|_{z^\bullet}\big)\otimes \big(d_{z^\bullet} I_k\big)
+\sum_{n\in B} B_n\,\big(\partial_{q_n}\otimes dp_n-\partial_{p_n}\otimes dq_n\big)
\end{equation}
for some matrices $\big(A_{nk})_{n,k\in A}$ and $\big(B_n\big)_{n\in B}$ with constant elements.
Note that in view of the commutative diagram \eqref{eq:diagram_operators} and Theorem \ref{th:the_spectrum_of_X_H},
the unbounded operator $\widetilde{\LL}_c$ on  $\ell^2\times\ell^2$ with domain $\h^2\times\h^2$ has a compact resolvent. 
In particular, it has discrete spectrum. Moreover, by Theorem \ref{th:the_spectrum_of_X_H} (i), zero belongs to 
the spectrum of $\widetilde{\LL}_c$ and has geometric multiplicity one. Since, in view of \eqref{eq:L-tilde_final}, 
the vectors $X_{I_k}\big|_{z^\bullet}$, $k\in A$, are eigenvectors of $\widetilde{\LL}_c$ with eigenvalue zero, 
we conclude that $A$ consists of one element $A=\{n_0\}$ and that $B_n\ne 0$ for any $n\in\Z\setminus\{n_0\}$. 
Hence, the spectrum of $\widetilde{\LL}_c$ consists of $\big\{\pm i B_n\big\}_{n\in\Z\setminus\{n_0\}}$ and zero, 
which has algebraic multiplicity two and geometric multiplicity one. This completes the proof of 
Lemma \ref{lem:the_spectrum_of_X_H_tilde}.
\end{proof}

Let $\{(x,y)\}$ be the coordinates in $\R^2$ equipped with the canonical symplectic 
form $d x\wedge dy$ and let $I=(x^2+y^2)/2$. The proof of the following Lemma is not complicated and thus omitted.

\begin{Lem}\label{lem:center}
If $F : \R^2\to\R$ is a $C^1$-map such that $\{F,I\}=0$ in some open neighborhood of 
zero then $d_{(0,0)}F=0$.
\end{Lem}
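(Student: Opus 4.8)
The plan is to translate the algebraic hypothesis $\{F,I\}=0$ into rotation invariance of $F$ and then exploit the linearity of the differential at the origin. First I would write the bracket explicitly: with the symplectic form $dx\wedge dy$ and $I=(x^2+y^2)/2$ one has the Hamiltonian vector field $X_I=x\partial_y-y\partial_x$, so the hypothesis reads
\[
\{F,I\}=dF(X_I)=x\,\partial_y F-y\,\partial_x F=0
\]
on some disk $D_\varepsilon=\{x^2+y^2<\varepsilon^2\}$ about the origin. The flow of $X_I$ is the rotation $R^t:(x,y)\mapsto(x\cos t-y\sin t,\,x\sin t+y\cos t)$, and $D_\varepsilon$ is rotation invariant, so for any $(x,y)\in D_\varepsilon$ one has $\frac{d}{dt}F(R^t(x,y))=\{F,I\}(R^t(x,y))=0$. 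Hence $F\circ R^t=F$ on $D_\varepsilon$ for all $t\in\R$.

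Next I would record the consequence that $F$ is radial: for any $(x,y)\in D_\varepsilon$ the point lies on the same $X_I$-orbit, namely the circle of radius $r=\sqrt{x^2+y^2}$, as the point $(r,0)$, so $F(x,y)=F(r,0)=:f(r)$ for a function $f$ on $[0,\varepsilon)$. Because $F$ is $C^1$, for every unit vector $v=(\cos\theta,\sin\theta)$ the directional derivative
\[
d_{(0,0)}F(v)=\lim_{s\to0^+}\frac{F(sv)-F(0,0)}{s}=\lim_{s\to0^+}\frac{f(s)-f(0)}{s}
\]
exists and is independent of $\theta$; I would denote the common value by $a$.

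Finally, I would invoke linearity of the differential. Since $F$ is differentiable at the origin, the map $v\mapsto d_{(0,0)}F(v)$ is linear, so $d_{(0,0)}F(-v)=-d_{(0,0)}F(v)$; but the computation above applied to $-v$ gives $d_{(0,0)}F(-v)=a=d_{(0,0)}F(v)$ as well, forcing $a=-a$ and hence $a=0$. Thus $d_{(0,0)}F(v)=0$ for every $v$, i.e. $d_{(0,0)}F=0$. There is no serious obstacle here; the only points requiring care are shrinking the neighborhood so that it is rotation invariant, so that full $X_I$-orbits lie in the domain and the radial reduction is legitimate, and using only the $C^1$ hypothesis, which is precisely enough both to deduce $F\circ R^t=F$ from the pointwise identity $dF(X_I)=0$ and to run the directional-derivative and linearity argument.
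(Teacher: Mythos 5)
Your proof is correct. There is nothing in the paper to compare it against: the authors explicitly omit the proof of this lemma, stating only that it ``is not complicated.'' Your route is sound --- integrating the identity $dF(X_I)=0$ along the rotation flow on a rotation-invariant disk to conclude that $F$ is radial there, and then playing the direction-independence of the one-sided radial derivative against the linearity (in particular, oddness) of $d_{(0,0)}F$; the passage from one-sided limits to the differential is legitimate precisely because differentiability of $F$ at the origin guarantees the two-sided limits exist. For what it is worth, there is an even shorter argument that uses the $C^1$ hypothesis more directly and avoids the flow entirely: the identity $y\,\partial_x F - x\,\partial_y F = 0$ evaluated at points $(x,0)$ with $x\neq 0$ gives $\partial_y F(x,0)=0$, and evaluated at points $(0,y)$ with $y\neq 0$ gives $\partial_x F(0,y)=0$; letting $x\to 0$ and $y\to 0$ and using the \emph{continuity} of the partial derivatives yields $\partial_x F(0,0)=\partial_y F(0,0)=0$, i.e. $d_{(0,0)}F=0$. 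Both arguments are rigorous; yours trades the continuity of the partials for the geometric picture of rotation invariance, which is arguably closer to the way the lemma is used in the proof of Lemma 4.1 of the paper.
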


Now, we are ready to prove Theorem \ref{th:main}.

\begin{proof}[Proof of Theorem \ref{th:main}]
Take $c\in\C$ such that $|c|\notin\pi\Z$ and $|c|>\pi$, and assume that there exist gauge invariant local Birkhoff coordinates 
of the focusing NLS equation in a neighborhood of the constant potential $\varphi_c$.
In view of Lemma \ref{lem:c-real} and Theorem \ref{th:the_spectrum_of_X_H} (i) the spectrum of $\LL_c$ on $i L^2_r$ is 
discrete and contains non-zero real eigenvalues. On the other side, by Lemma \ref{lem:the_spectrum_of_X_H_tilde}, 
the spectrum of $\widetilde{\LL}_c$ lies on the imaginary axis. This shows that the two operators are not conjugated 
and hence, contradicts the existence of local Birkhoff coordinates.
\end{proof}


\end{document}